\documentclass[12pt]{amsart}
\usepackage{}

\usepackage{amsmath}
\usepackage{amsfonts}
\usepackage{amssymb}
\usepackage[all,cmtip]{xy}           
\usepackage{bm}
\usepackage{bbm}
\usepackage{xcolor}
\usepackage{eucal}
\usepackage{bbding}
\usepackage{txfonts}
\usepackage{amscd}
\usepackage{makecell}
\usepackage[shortlabels]{enumitem}
\usepackage{ifpdf}
\ifpdf
  \usepackage[colorlinks,final,backref=page,hyperindex]{hyperref}
\else
  \usepackage[colorlinks,final,backref=page,hyperindex,hypertex]{hyperref}
\fi
\usepackage{tikz}
\usepackage[active]{srcltx}
\usepackage{caption}
\usepackage{comment}

\makeatletter

\newtheorem{thm}{Theorem}[section]
\newtheorem{lem}[thm]{Lemma}
\newtheorem{cor}[thm]{Corollary}
\newtheorem{pro}[thm]{Proposition}
\newtheorem{rmk}[thm]{Remark}
\newtheorem{defi}[thm]{Definition}

\theoremstyle{definition}

\newcommand{\be }{\begin{equation}}
\newcommand{\ee }{\end{equation}}

\newcommand{\g}{\mathfrak g}
\newcommand{\h}{\mathfrak h}

\newcommand{\huaA}{\mathcal{A}}

\newcommand{\huaW}{\mathcal{W}}

\newcommand{\huaC}{{\mathcal{C}}}

\newcommand{\frke}{\mathfrak e}

\newcommand{\dM}{\mathrm{d}}
\newcommand{\D}{\mathrm{D}}

\newcommand{\Id}{{\rm{Id}}}

\newcommand{\br}[1]{   [ \cdot,    \cdot  ]   }

\newcommand{\Hom}{\mathrm{Hom}}

\newcommand{\Harr}{\mathrm{Harr}}
\newcommand{\CE}{\mathrm{CE}}
\newcommand{\MC}{\mathrm{MC}}
\newcommand{\coker}{\mathrm{coker}}
\newcommand{\Prime}{\mathrm{Prime}}
\newcommand{\Gp}{\mathrm{Gp}}
\newcommand{\conil}{\mathrm{conil}}
\newcommand{\I}{\mathrm{I}}

\newcommand{\CDGCog}{\mathrm{\bf DGCCog}}
\newcommand{\cuLA}{\mathrm{\bf CuLA}}

\newcommand{\Cof}{\mathrm{Cof}}
\newcommand{\rlp}{\mathrm{rlp}}

\newcommand{\Coprod}{\mathrm{Coprod}}

\newcommand{\Der}{\mathrm{Der}}

\newcommand{\Set}{\mathrm{Set}}

\newcommand{\K}{k}

\begin{document}

\title[Global Koszul duality]{Global Koszul duality: differential graded cocommutative coalgebras and curved Lie algebras}

\author{Alexander Mallon}
\address{Department of Mathematics and Statistics, Lancaster university, Lancaster LA1 4YF, UK}
\email{a.mallon@lancaster.ac.uk}

\author{You Wang}
\address{Department of Mathematics, Jilin University, Changchun 130012, Jilin, China}
\email{wangyou888@jlu.edu.cn}

\vspace{-5mm}


\begin{abstract}
We give a combinatorial model structure to the category of, not necessarily conilpotent, differential graded (dg) cocommutative coalgebras and an $\infty$-category structure to the category of curved Lie algebras over an algebraically closed field of characteristic $0$. Further, we extend the Harrison and Chevally-Eilenberg functors between dg cocommutative conilpotent coalgebras and dg Lie algebras to these categories and show they form an equivalence of $\infty$-categories.
\end{abstract}

\keywords{Koszul duality, pseudocompact algebra, model category, Maurer-Cartan element, curved Lie algebra.}

\maketitle

\tableofcontents

\allowdisplaybreaks


\section{Introduction}
There are many manifestations of the Koszul duality between differential graded (dg) cocommutative coalgebras and dg Lie algebras, which exhibits an equivalence of the homotopy theory on their respective categories.
The first such equivalence was shown by Quillen \cite{Quillen}, which, in addition to introducing the concepts of model categories and Quillen equivalences between them, exhibited a Quillen equivalence between Lie algebras and cocommutative coalgebras. However, this came with strong grading restrictions. Later on, Hinich \cite{Hinich} generalised this result, removing the grading restriction. This was achieved by introducing a finer concept of weak equivalence on the category of cocommutative coalgebras in filtered quasi-isomorphisms.

Positselski \cite{Pos1} has constructed various forms Koszul duality between dg coassociative coalgebras and dg associative algebras, where the (co)algebras considered are not (co)augmented. The removal of augmentations from one side of the equivalence comes at the cost of adding curvature to the other.
A curved algebra, like a dg algebra, has an associated degree 1 derivation $d$, but is not subject to the condition $d^2 = 0$. Rather, there is a degree 2 element $h \in A^2$ such that $d^2(a) = [h,a] = ha - ah$ and $d(h)=0$. Curved coalgebras are defined dually with the curvature defined as degree 2 map $h:C\rightarrow k$ satisfying a dual condition.
A morphism between curved algebras $(f,b):A\rightarrow B$ comes equipped with a change of curvature $b\in B$. Curved morphisms between curved algebras must commute with the differential up to the change in curvature, that is, $fd(a) = df(a) + [b,f(a)]$.
This philosophy is taken in further generalisations of the Koszul duality of cocommutative coalgebras and Lie algebras, where the removal of coaugmentations on the category of cocommutative coalgebra side results in curvature on the category of Lie algebras.

In the above cases, the coalgebras considered are always conilpotent, a restriction that can be lifted in certain cases albeit by replacing the category of curved Lie algebras. Chuang, Lazarev and Mannan \cite{CLM} considers all (not necessarily conilpotent) coalgebras over an algebraically closed field, showing that the resulting category is Quillen equivalent to the category of formal coproducts of curved Lie algebras with strict morphisms between them.
This was achieved using the result that any coalgebra over an algebraically closed field splits as a coproduct of conilpotent coalgebras or, dually, the equivalence between the categories of pseudo-compact algebras and the category of formal products of local algebras.
Another generalisation is constructed by Maunder \cite{Maunder} wherein the category of marked curved Lie algebras is considered. These are curved Lie algebras with a distinguished non-empty set of degree $1$ elements called marked points and curved Lie algebra morphisms which preserve marked points. A curved Lie algebra can be twisted by a marked point $x$ by changing the differential to $d^x(a) := d(a) + [a,x]$. A twisted curved Lie algebra will usually also be curved. The result will be a dg Lie algebra (that is, with curvature $h = 0$) when the marked point is a Maurer Cartan (MC) element. These are degree one elements which satisfy $dx +\frac{1}{2}[x,x] = h$. In such a case, quasi-isomorphisms can be considered. A similar result is shown by Booth and Lazarev \cite{BooLaz} for the categories of curved associative algebras and not necessarily conilpotent curved coassociative coalgebras, where a Quillen equivalence given by an extended bar-cobar adjunction is given.

We  consider the category of curved Lie algebras, with curved morphisms. A map between two curved Lie algebras will be a weak equivalence if, for all MC elements, the resulting twisted map between dg Lie algebras is a quasi-isomorphism. Unlike the category of marked curved Lie algebras, this will also include Lie algebras with no MC elements. For example, the one dimensional Lie algebra $L\langle h\rangle$ generated by one element $h$ in degree 2 which also acts as the curvature. This is the initial object in the category of curved Lie algebras with strict morphisms. In this approach, MC elements in a curved Lie algebra act as basepoints do in a topological space with multiple path components.
For a curved Lie algebra $(X,d,h)$, an MC element $x \in X$ corresponds precisely to a curved morphism $(0,x):(0,0,0)\rightarrow (X,d,h)$ from the terminal curved Lie algebra. The twisting of $(X,d,h)$ by $x$ to give the dg Lie algebra $(X,d^x,0)$ corresponds to the choice of a path component in a topological space.

Unlike the above cases, we will show that the categories of dg commutative coalgebras and curved Lie algebras are equivalent on the level of $\infty$-categories.
These categories, with their respective classes of weak equivalences, each have an associated $\infty$-category in the sense of \cite{BarKan}, where relative categories give a model for $\infty$-categories. The $\infty$-categorical equivalence is induced by the adjunction $\Harr\dashv \check{\CE}$ of the Harrison functor and the extended Chevally-Eilenberg functor.
Further, since the $\infty$-category structure of cocommutative coalgebras comes from a combinatorial model category, we find that the category of curved Lie algebras is equivalent, as an $\infty$-category, to a combinatorial model category.

\subsection{Notation and conventions.}

In this paper, we work over an algebraically closed field $\K$. All vector spaces are over $\K$ and differential graded (dg) vector spaces are assumed to be cohomologically $\mathbb{Z}$-graded. We will denote cohomological gradings with superscripts. Given a graded vector space $V$, its suspension (or a shift) $\Sigma V$ is a graded vector space with $(\Sigma V)^i=V^{i-1}$. We also denote the shift   $\Sigma V$ by $V[1]$ and the inverse functor of $[1]$ by $[-1]$.

We denote the category of dg Lie algebras by $\bf DGLA$ and the category of curved Lie algebras with curved morphisms by $\bf \cuLA$. In addition, the category of curved Lie algebras with an adjoined initial object $\emptyset$ by $\bf CuLA_{\emptyset}$. All coalgebras are assumed to be differential graded unless stated otherwise. The categories of coaugmented and conilpotent cocommutative coalgebras are denoted $\CDGCog^{coaug}$ and $\CDGCog^{conil}$ respectively. The category of cocommutative coalgebras, which are not necessarily coaugmented or conilpotent, is denoted $\CDGCog$. For results and terminology about coalgebras, we refer the reader to Positselski's recent survey \cite{Pos1}.
\vspace{2mm}

\subsection{Outline of the paper}

In section 2, we recall the definitions of the Harrison and Chevally-Eilenberg functors which form an adjoint pair between the categories of coaugmented cocommutative coalgebras and dg Lie algebras. We then extend this adjunction to the categories of (not necessarily conilpotent) cocommutative coalgebras and curved Lie algebras. In both situation, we prove adjointness by showing the Harrison and Chevally-Eilenberg complexes are representing objects for the functor of Maurer-Cartan elements. We then show this new adjunction slices to an adjunction between dg Lie algebras and coaugmented cocommutative coalgebras.

In section 3, we introduce a model category structure on ${\CDGCog}$. This model structure is induced from the combinatorial model structure on ${\CDGCog^{conil}}$ and the equivalence ${\CDGCog} \cong {\bf CoProd(\CDGCog^{conil})}$. We show that the induced model structure on the category of formal coproducts of a combinatorial model category is itself combinatorial.

In section 4, we introduce the weak equivalences on $\cuLA_\emptyset$ which endows it with an $\infty$-category structure. We then show that $\Harr$ and $\check{\CE}$ are mutual $\infty$-categorical equivalences between the categories $\cuLA_\emptyset$ and $\CDGCog$ by showing that the components of the unit and counit of the adjunction are weak equivalences in their respective categories. This gives the result that the corresponding homotopy categories are equivalent.
\vspace{2mm}

\subsection{Acknowledgements.}
We would like to thank Andrey Lazarev for numerous helpful and fruitful discussions during the writing of this paper, and for his corrections and remarks on earlier drafts.

\section{Extended Chevalley-Eilenberg functors}

The classical Harrison and Chevalley-Eilenberg functors give an adjunction between the categories of (coaugmented) conilpotent cocommutative coalgebras and dg Lie algebras in \cite{Hinich,Quillen}. In this section, we describe a non-conilpotent version of this adjunction, before extending it to curved Lie algebras. The Harrison functor remains the same, but the Chevalley-Eilenberg functor $\CE$ is replaced with the extended Chevalley-Eilenberg functor $\check{\CE}$. The main difference in the construction of $\check{\CE}$ is that we must replace the cofree conilpotent cocommutative coalgebra with the cofree cocommutative coalgebra (a much larger coalgebra).

\subsection{Commutative pseudocompact algebras}

Let $\{V_i\}$ be a cofiltered system of finite dimensional $k$-vector spaces. The inverse limit $\varprojlim\limits_i V_i$ can be endowed with the inverse limit topology, where each finite-dimensional vector space is endowed with the discrete topology. A topological vector space $V$ is said to be pseudocompact if $V$ is isomorphic to $\varprojlim\limits_i V_i$. Every vector space $V$ can be viewed as the filtered colimit of its finite-dimensional subspaces, making its dual $V^*$ naturally pseudocompact. Given pseudocompact vector spaces $V$ and $W$, the space of morphisms $\Hom(V,W)$ consists of continuous linear maps and the tensor product between $V$ and $W$ is the completed tensor product $V\hat{\otimes} W$. If $V=\varprojlim\limits_i V_i$ is pseudocompact and $W$ is discrete, then their tensor product is defined by
$V\otimes W=\varprojlim\limits_i V_i \otimes W$, which is neither discrete nor pseudocompact in general.
Similarly, if $\{A_i\}$ is a cofiltered system of finite-dimensional $k$-algebras, its colimit can also be equipped with the inverse limit topology. Such a topological algebra is likewise referred to as pseudocompact. In particular, a topological algebra is pseudocompact if and only if it is complete, Hausdorff, and has a basis consisting of ideals with finite codimension.

Suppose $C$ is a coalgebra. We can express $C$ as the union of its finite-dimensional subcoalgebras $\{C_i\}$, according to a well-known result by Sweedler. Then, its linear dual $C^* \cong \varprojlim C_i$ is naturally a pseudocompact algebra. If $A$ is a pseudocompact algebra, its topological dual $A^*$ is a coalgebra. By convention, the dual of a pseudocompact algebra always refers to its topological dual, hence we have $C^{**} \cong C$ and $A^{**} \cong A$. Furthermore, the linear and topological duals establish a contravariant equivalence between the categories of pseudocompact (commutative) algebras and (cocommutative) coalgebras. The category of (graded) pseudocompact algebras is denoted by {\bf pcAlg}, the category of pseudocompact dg (commutative) algebras is denoted by {\bf pcDGA} (or {\bf pcDGCAlg}) and the category of augmented pseudocompact dg (commutative) algebras is denoted by ${\bf pcDGA^{\bf aug}}$ (or ${\bf pcDGCAlg^{\bf aug}}$). The duality between pseudocompact algebras and coalgebras generalise to the graded, differential graded, and (co)augmented cases.

Given an algebra $A$, its pseudocompact completion $\check{A}$ is the projective limit of the inverse system of quotients of $A$ by its cofinite dimensional ideals. The pseudocompact completion defines a functor from $ \bf{Alg} \to \bf{pcAlg} $, which is left adjoint to the functor $\bf{pcAlg} \to \bf{Alg}$ that forgets the topology.

\begin{defi}
Let $V$ be a pseudocompact graded vector space. If $V$ is finite-dimensional, its commutative pseudocompact tensor algebra $\check{S}V$ is the pseudocompact completion of the commutative tensor algebra $SV$. For a general pseudocompact vector space $V = \varprojlim\limits_i V_i $, its commutative pseudocompact tensor algebra is defined as
$$
\check{S} V := \varprojlim\limits_i \check{S} V_i.
$$
\end{defi}

\begin{pro}\label{checkSV}
Let $V$ be a pseudocompact graded vector space.
\begin{enumerate}
    \item The commutative pseudocompact tensor algebra $\check{S}V$ is the free commutative pseudocompact algebra on $V$. That is, it is left adjoint to the functor that sends a commutative pseudocompact algebra to its underlying pseudocompact vector space. This means we have the following bijections for all pseudocompact vector spaces $V$ and commutative pseudocompact algebras $A$:
    $$\Hom_{\bf pcDGCAlg}(\check{S}V, A) \cong \Hom_{\bf pcVect}(V, A).$$
    \item For any pseudocompact $\check{S}V-\check{S}V-$bimodule $M$, there is a bijection

    $$\Der(\check{S}V,M)\cong \Hom_{\bf pcVect}(V,M).$$

    In particular, when $M$ is also $\check{S}V$, we have

    $$\Der(\check{S}V)\cong \Hom_{\bf pcVect}(V,\check{S}V).$$

\end{enumerate}
\end{pro}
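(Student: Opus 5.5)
The plan is to reduce first to finite-dimensional $V$, then pass to the limit. Both statements are representability statements, so each will follow from a universal property of the pseudocompact completion combined with the universal property of the ordinary symmetric algebra.

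\emph{Finite-dimensional case of (1).} Suppose $V$ is finite dimensional and $A$ is a commutative pseudocompact algebra. Any linear map $V\to A$ is automatically continuous, because $V$ carries the discrete topology. By the universal property of the ordinary free commutative algebra, such a map extends uniquely to an algebra map $SV\to A$. The completion functor $\bf Alg\to \bf pcAlg$ is left adjoint to the forgetful functor, and this adjunction restricts to commutative algebras: the completion of a commutative algebra is commutative, being a limit of commutative quotients. Hence algebra maps $SV\to A$ correspond bijectively to continuous algebra maps $\check SV\to A$. Composing the two bijections gives $\Hom(\check SV,A)\cong\Hom(V,A)$.

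\emph{General case of (1).} Write $V=\varprojlim V_i$ and $\check SV=\varprojlim \check SV_i$. A continuous map $f\colon V\to A=\varprojlim A_j$, with the $A_j$ finite dimensional, is determined by the compatible family $V\to A_j$. Each $V\to A_j$ has open kernel, so it factors through some $V_i$, and by the finite case it extends to $\check SV_i\to A_j$. Compatibility in $i$ and $j$ follows from uniqueness in the finite case, so these extensions assemble into a continuous map $\check SV\to A$. This last step is the main obstacle: one has to check that $\check SV\to \check SV_i\to A_j$ is independent of the chosen $i$, and that $V\to\check SV$ has topologically dense image, which is what gives uniqueness. I would verify density by noting that every open ideal of $\check SV$ contains the preimage of an open ideal of some $\check SV_i$, where density of $SV_i$ is clear. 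Naturality in $V$ and $A$ is then formal.

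\emph{Proof of (2).} I would reduce (2) to (1) via the square-zero extension trick. For a pseudocompact bimodule $M$ over the commutative algebra $\check SV$, form $\check SV\oplus M$ with $M^2=0$; this is again a commutative pseudocompact algebra when $M$ is symmetric. (For a general bimodule one instead argues directly by the same limit argument, because derivations from $SV_i$ are determined by their values on generators.) Derivations $\check SV\to M$ correspond to algebra maps $\check SV\to\check SV\oplus M$ that split the projection. By (1), these correspond to continuous maps $V\to \check SV\oplus M$ whose first component is the inclusion, that is, to elements of $\Hom_{\bf pcVect}(V,M)$. Taking $M=\check SV$ gives the final isomorphism. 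In the graded setting, the Koszul signs for derivations of degree $n$ are handled by working with the grading shift $M[n]$.
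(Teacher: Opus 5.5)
Your argument follows the paper's route. For (1) you take the finite-dimensional case from the universal property of $SV$ composed with the completion adjunction, and the general case by writing $V=\varprojlim V_i$ and $A=\varprojlim A_j$. For (2) you use the square-zero extension $\check{S}V\oplus M$. The one difference is in the general case of (1). The paper gets $\Hom(\check{S}V,A_j)\cong\varinjlim_i\Hom(\check{S}V_i,A_j)$ in one line from compactness of finite-dimensional algebras. You instead build the map stage by stage, factoring $V\to A_j$ through some $V_i$, and get uniqueness from density. Note that what is dense is the subalgebra generated by the image of $V$, not $V$ itself: $V$ is linearly compact, so its image is closed and proper.

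There is one genuine error. Your parenthetical claims that the general, non-symmetric bimodule case follows by the same limit argument, because derivations are determined by their values on generators. That argument only gives injectivity of $\Der(\check{S}V,M)\to\Hom(V,M)$; existence fails.

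Here is a counterexample. Let $V$ be spanned by $v,w$ in degree $0$, and let $M=\check{S}V\hat{\otimes}\check{S}V$ with the outer bimodule structure $a(x\otimes y)b=ax\otimes yb$. Suppose the assignment $v\mapsto 1\otimes 1$, $w\mapsto 0$ extended to a derivation $D$. Then $D(vw)=D(v)w+vD(w)=1\otimes w$, while $D(wv)=D(w)v+wD(v)=w\otimes 1$. These differ, contradicting $vw=wv$.

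So part (2) is only true for symmetric bimodules, that is, genuine modules over the commutative algebra $\check{S}V$. That is exactly the case your square-zero argument covers. The paper's proof covers the same case, since it tacitly needs $\check{S}V\oplus M$ to be commutative. Drop the parenthetical and state the restriction explicitly.
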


\begin{proof}
\begin{enumerate}
  \item If $V$ is finite-dimensional, then $V$ is discrete and $$\Hom_{\bf dg vect}(V, A) \cong \Hom_{\bf CDGA}(SV, A),$$which equals $\Hom_{\bf pcDGCAlg}(\check{S}V, A)$ as pseudocompact completion is left adjoint to the forgetful functor that forgets the topology. More generally, writing $V=\varprojlim\limits_i V_i$ and $A=\varprojlim\limits_j A_j$ with $V_i,A_j$ finite-dimensional for all $i$ and $j$ respectively, we have
        \begin{eqnarray*}
        \Hom_{\bf pcDGCAlg}(\check{S}V, A) &\cong& \varprojlim\limits_j \Hom_{\bf pcDGCAlg} (\check{S}V, A_j)\\
        &\cong& \varprojlim\limits_j \varinjlim\limits_i \Hom(\check{S}V_i, A_j)\\
        &\cong& \varprojlim\limits_j \varinjlim\limits_i \Hom(V_i, A_j)\\
        &\cong& \Hom_{\bf pcVect}(V, A).
        \end{eqnarray*}
        Here, the second bijection holds as finite-dimensional commutative algebras are compact in
        {\bf pcDGCAlg}, that is, the functor $\Hom_{\bf pcDGCAlg}(-,A)$ commutes with filtered colimits.
  \item Given a pseudocompact dg commutative algebra $A$ and an $A-A-$bimodule $M$, consider the pseudocompact dg commutative algebra $A\oplus M$ with the following multiplication:
      $$ (a,m)\cdot(b,n)=(ab,an+mb), \quad \forall a,b\in A,m,n\in M.$$
      Let $p:A\oplus M\to A$ be the natural projection. Then there exists a bijection:
      $\Der(A,M)\cong \{f\in\Hom_{\bf pcDGCAlg} (A,A\oplus M) |p\circ f=\Id_A \}$.
      Setting $A=\check{S}V$ and using (1), we have
      $$ \Der(\check{S}V,M)\cong \{f\in\Hom_{\bf pcDGCAlg} (\check{S}V,\check{S}V \oplus M) |p\circ f=\Id_A \} \cong \Hom_{\bf pcVect}(V,M).  $$
\end{enumerate}
\end{proof}

\begin{rmk}
The commutative pseudocompact tensor algebra $\check{S}V$ is the $\K-$linear dual to the Sweedler cofree cocommutative coalgebra on the discrete vector space $V^*$.
\end{rmk}

\begin{defi}\label{first-version}
We define a pair of functors as follows:
$$ \Harr: {\CDGCog}^{\bf coaug}\rightleftarrows {\bf DGLA}:\check{\CE}.$$
The Harrison functor $\Harr$ sends a coaugmented cocommutative coalgebra $(C,\Delta_C,\delta_C)$ to the dg Lie algebra
$$ \Harr(C):= \Big( \mathcal{L}(\Sigma^{-1} \bar{C}),~[\cdot,\cdot],\dM_\Harr\Big),$$
where $\mathcal{L}(\Sigma^{-1} \bar{C})$ is the free Lie algebra spanned by the primitive elements of the tensor algebra $T(\Sigma^{-1} \bar{C})$ where $\bar{C}$ is the cokernel of the coaugmentation. The Lie bracket $[\cdot,\cdot]$ is then commutator of the concatenation product on $T(\Sigma^{-1} \bar{C})$. The differential $\dM_\Harr$ on  $\Harr(C)$ is the unique extension of $d_1+d_2$ by the graded Leibniz rule, where $\dM_1:\Sigma^{-1} \bar{C} \to \Sigma^{-1} \bar{C}$ and $\dM_2:\Sigma^{-1} \bar{C} \to \Sigma^{-1} \bar{C}\otimes \Sigma^{-1} \bar{C}$ are induced by the differential $\delta_C$ and comultiplication $\Delta_C$ respectively.

The extended Chevalley-Eilenberg Functor $\check{\CE}$ sends a dg Lie algebra $(\g,[\cdot,\cdot]_{\g},\dM_{\g})$ to the coaugmented cocommutative coalgebra $\check{\CE}(\g)$, which is the topological dual of the following dg commutative pseudocompact algebra
$ \Big( \check{S}(\Sigma^{-1} \g^*),\hat{\odot},\dM^*_{\check{\CE}}\Big)$.
Here, $\check{S}(\Sigma^{-1} \g^*)$ is the pseudocompact completion of $S(\Sigma^{-1} \g^*)$ and the multiplication $\hat{\odot}$ is the completion of the graded commutative multiplication ${\odot}$ on $S(\Sigma^{-1} \g^*)$. We define the differential on $\check{S}(\Sigma^{-1} \g^*)$ as follows. Let $\delta_1:\Sigma^{-1} \g^* \to \Sigma^{-1} \g^*$ and $\delta_2:\Sigma^{-1} \g^* \to \Sigma^{-1} \g^*\otimes \Sigma^{-1} \g^*$ be induced by the differential $\dM_{\g}$ and Lie bracket $[\cdot,\cdot]_{\g}$ respectively. For a pseudocompact vector space $V$, consider the semi-completed commutative tensor algebra $S'(V)=\oplus_{n\geqslant 1} V^{\hat{\odot} n}$, which has a topology neither pseudocompact nor discrete, and has a property $\Hom(S'(V),B)\cong \Hom(V,B)$ for any commutative pseudocompact algebra $B$. More details can be seen in \cite[Lemma 4.5]{Guan}. By Proposition \ref{checkSV}, the identity on $\check{S}(\Sigma^{-1} \g^*)$ induces a map $i:S'(\Sigma^{-1} \g^*)\to \check{S}(\Sigma^{-1} \g^*)$, we then have a map:
$$ i\circ (\delta_1+\delta_2):\Sigma^{-1} \g^* \to S'(\Sigma^{-1} \g^*)\to \check{S}(\Sigma^{-1} \g^*), $$
which can be extended to the differential $\dM^*_{\check{\CE}}$ on $\check{S}(\Sigma^{-1} \g^*)$ by the graded Leibniz rule. Taking the topological dual of this algebra yields the cocommutative coalgebra $(\check{CE(\g)},\Delta,\dM_{\check{\CE}})$.
\end{defi}

\subsection{Maurer-Cartan elements and representability}
We now show that the functors defined above form an adjuction $\Harr \dashv \check{\CE}$. This is done by showing that both functors represent the functor of Maurer-Cartan elements. We now recall the definition of Maurer-Cartan (MC) elements.

\begin{defi}
    Let $(\g,[\cdot,\cdot],\dM)$ be a dg Lie algebra. A {\bf Maurer-Cartan element} in $\g$ is an element $x\in \g$ of degree $1$ such that $\dM x+\frac{1}{2}[x,x]=0.$ The set of all Maurer-Cartan elements in $\g$ is denoted by $\MC(\g)$. It is immediate that dg algebra morphisms preserve MC elements. Hence, MC elements define a functor $\MC:\bf{DGLA}\rightarrow \bf{Set}$.
\end{defi}

We now recall that the graded vector space of linear maps between a dg Lie algebra and a cocommutative coalgebra can itself be given the structure of a dg Lie algebra.

\begin{pro}
Let $(C,\Delta_C,\delta_C)$ be a cocommutative coalgebra and $(\g,[\cdot,\cdot]_{\g},\dM_{\g})$ be a dg Lie algebra. Then $(\Hom(C,\g),[\cdot,\cdot]_{\ast},\partial)$ is a dg Lie algebra, where $[\cdot,\cdot]_{\ast}$ is the convolution product given by
$$[f,g]_{\ast}(x)= (-1)^{|g||x_{(1)}|}  [f(x_{(1)}), g(x_{(2)})]_{\g},$$
Where, $\forall x\in C$, the coproduct $\Delta_C(x)=x_{(1)}\otimes x_{(2)}$ is written in Sweedler notation with the summation left implicit.
The differential $\partial$ is defined by
$$ \partial(f)=\dM_{\g} f-(-1)^{|f|} f\delta_C. $$
\end{pro}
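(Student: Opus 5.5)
I will verify the three axioms of a dg Lie algebra for $(\Hom(C,\g),[\cdot,\cdot]_\ast,\partial)$: graded antisymmetry and the graded Jacobi identity for the convolution bracket, that $\partial^2=0$, and that $\partial$ is a graded derivation of $[\cdot,\cdot]_\ast$. The cleanest way is to write the bracket without elements,
$$[f,g]_\ast=[\cdot,\cdot]_\g\circ(f\otimes g)\circ\Delta_C,$$
with the Koszul sign convention $(f\otimes g)(x\otimes y)=(-1)^{|g||x|}f(x)\otimes g(y)$. This reproduces exactly the sign $(-1)^{|g||x_{(1)}|}$ in the statement, so all sign bookkeeping is absorbed by the symmetric monoidal structure on graded vector spaces.

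For antisymmetry, I use cocommutativity, $\tau\circ\Delta_C=\Delta_C$ with $\tau$ the signed flip. Together with $(g\otimes f)\circ\tau=(-1)^{|f||g|}\tau\circ(f\otimes g)$ and the antisymmetry $[\cdot,\cdot]_\g\circ\tau=-[\cdot,\cdot]_\g$, this gives $[g,f]_\ast=-(-1)^{|f||g|}[f,g]_\ast$. For Jacobi, coassociativity lets me write
$$[[f,g]_\ast,h]_\ast=[[\cdot,\cdot],\cdot]_\g\circ(f\otimes g\otimes h)\circ\Delta^{(2)}_C,$$
with $\Delta^{(2)}_C=(\Delta_C\otimes\Id)\Delta_C$ the iterated coproduct, and similarly for the cyclic permutations. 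Cocommutativity makes $\Delta^{(2)}_C$ invariant under the (signed) action of cyclic permutations of the three tensor factors. Hence the cyclic sum of double brackets is the cyclic sum of $[[\cdot,\cdot],\cdot]_\g$ (with signs) composed with $(f\otimes g\otimes h)\Delta^{(2)}_C$, and it vanishes by the Jacobi identity in $\g$.

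For the differential, $\partial f=\dM_\g\circ f-(-1)^{|f|}f\circ\delta_C$ is just the standard differential on the internal Hom complex. So $\partial^2=0$ follows from $\dM_\g^2=0$ and $\delta_C^2=0$, the cross terms cancelling by the sign. For the Leibniz rule I use two facts:
\begin{itemize}
\item $\dM_\g$ is a derivation of $[\cdot,\cdot]_\g$, so $\dM_\g\circ[\cdot,\cdot]_\g=[\cdot,\cdot]_\g\circ(\dM_\g\otimes\Id+\Id\otimes\dM_\g)$;
\item $\delta_C$ is a coderivation, so $\Delta_C\circ\delta_C=(\delta_C\otimes\Id+\Id\otimes\delta_C)\circ\Delta_C$.
\end{itemize}
Then
$$\partial[f,g]_\ast=[\cdot,\cdot]_\g\circ\bigl((\dM_\g\otimes\Id+\Id\otimes\dM_\g)(f\otimes g)-(-1)^{|f|+|g|}(f\otimes g)(\delta_C\otimes\Id+\Id\otimes\delta_C)\bigr)\circ\Delta_C.$$
Regrouping, using that the tensor product of maps satisfies $\partial(f\otimes g)=\partial f\otimes g+(-1)^{|f|}f\otimes\partial g$, gives $[\partial f,g]_\ast+(-1)^{|f|}[f,\partial g]_\ast$.

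I expect the main obstacle to be sign bookkeeping, particularly in the Jacobi identity and in matching the Koszul convention to the stated formula. Working with composites of structure maps, rather than in Sweedler notation, is intended to make this purely formal. I will also note that no conilpotency or finiteness is needed, since only finite sums $\Delta_C(x)=x_{(1)}\otimes x_{(2)}$ occur.
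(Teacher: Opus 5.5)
Your proposal is correct and follows essentially the same route as the paper. Your Leibniz computation splits $\partial[f,g]_\ast$ into $\dM_\g$-terms and $\delta_C$-terms exactly as the paper's direct calculation does, and you additionally verify graded antisymmetry and Jacobi from cocommutativity and coassociativity, which the paper only asserts; your check of $\partial^2=0$ on an arbitrary $f$ via $\dM_\g^2=0$ and $\delta_C^2=0$ is also cleaner than the paper's, which computes $\partial^2$ only on brackets $[f,g]_\ast$ and tacitly assumes $\partial^2 f=\partial^2 g=0$.
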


\begin{proof}
$(\Hom(C,\g),[\cdot,\cdot]_{\ast})$ is a graded Lie algebra, since $(C,\Delta_C,\delta_C)$ is a cocommutative coalgebra and $(\g,[\cdot,\cdot]_{\g},\dM_{\g})$ is a dg Lie algebra. Moreover,
by direct calculation, we have
\begin{eqnarray*}
\partial([f,g]_{\ast})
&=& \dM_{\g} [f,g]_{\ast}-(-1)^{|f+g|} [f,g]_{\ast} \delta_C\\
&=& [\dM_{\g} f,g]_{\ast}+(-1)^{|f|}[f,\dM_{\g} g]_{\ast}-(-1)^{|f|}[f\delta_C,g]_{\ast}-(-1)^{|f+g|}[f,g\delta_C]_{\ast}\\
&=& [\partial f,g]_{\ast}+(-1)^{|f|}[f,\partial g]_{\ast},
\end{eqnarray*}
and
\begin{eqnarray*}
\partial^2([f,g]_{\ast})
&=& \partial([\partial f,g]_{\ast}+(-1)^{|f|}[f,\partial g]_{\ast})\\
&=& [\partial^2 f,g]_{\ast}+(-1)^{|f|+1}[\partial f,\partial g]+(-1)^{|f|}[\partial f,\partial g]+[f,\partial^2 g]\\
&=& 0.
\end{eqnarray*}
So $\partial$ is a square-zero linear map of degree $1$ satisfying the graded Leibniz rule. Hence, we have shown $(\Hom(C,\g),[\cdot,\cdot]_{\ast},\partial)$ is a dg Lie algebra.
\end{proof}

\begin{defi}
    For any cocommutative coalgebra $(C,\Delta_C,\delta_C)$ and any dg Lie algebra $(\g,[\cdot,\cdot]_{\g},\dM_{\g})$, we define $\MC(C,\g):=\MC(\Hom(C,\g))$. This defines a bifunctor $$\MC:\CDGCog^{op}\times\bf{DGLA}\rightarrow \bf{Set}.$$
\end{defi}

We now show that $\Harr$ and $\check{CE}$ form an adjunction. We do this by showing that $\Harr(C)$ and $\check{CE}(\mathfrak{g})$ are representing objects for the functors $\MC(\bar{C},-)$ and $\MC(-,\mathfrak{g})$ respectively.

\begin{pro}\label{representable}
There is an adjunction between the categories of coaugmented cocommutative coalgebras and dg Lie algebras given by $\Harr \dashv \check{\CE}$.
\end{pro}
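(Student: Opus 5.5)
We prove the adjunction by exhibiting natural bijections
\[
\Hom_{\bf DGLA}(\Harr(C),\g)\;\cong\;\MC(\bar{C},\g)\;\cong\;\Hom_{\CDGCog^{\bf coaug}}(C,\check{\CE}(\g)),
\]
natural in the coaugmented coalgebra $C$ and the dg Lie algebra $\g$. Here $\MC(\bar C,\g)$ denotes the Maurer--Cartan elements of the convolution dg Lie algebra $\Hom(\bar C,\g)$. Equivalently, these are the MC elements of $\Hom(C,\g)$ that vanish on the image of the coaugmentation. The proposition above guarantees that this is a dg Lie algebra. Composing the two bijections gives the adjunction $\Harr\dashv\check{\CE}$.

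\textbf{First bijection.} Since $\mathcal{L}(\Sigma^{-1}\bar C)$ is the free graded Lie algebra on $\Sigma^{-1}\bar C$, a graded Lie map $\Harr(C)\to\g$ is the same as a degree $0$ linear map $\Sigma^{-1}\bar C\to\g$. This is in turn the same as a degree $1$ map $x:\bar C\to\g$. We must check that compatibility with differentials is equivalent to the MC equation. Both $\dM_\g\circ\phi$ and $\phi\circ \dM_\Harr$ are $\phi$-derivations, so it suffices to compare them on generators. On $\Sigma^{-1}\bar C$ we have $\dM_\Harr=\dM_1+\dM_2$. Unwinding $\dM_1$ from $\delta_C$ and $\dM_2$ from the reduced coproduct $\bar\Delta$, the condition becomes
\[
\dM_\g x + x\delta_C + \tfrac12[x,x]_\ast = 0 \quad\text{on } \bar C,
\]
which is exactly $\partial x+\frac12[x,x]_\ast=0$. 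Naturality is clear because everything is defined by composition.

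\textbf{Second bijection.} Pass to duals. Coaugmented coalgebra maps $C\to\check{\CE}(\g)$ correspond to continuous augmented dg algebra maps $\check S(\Sigma^{-1}\g^*)\to C^*$. By Proposition \ref{checkSV}(1), a continuous algebra map is determined by a continuous linear map $\Sigma^{-1}\g^*\to C^*$. The augmentation condition says this map lands in $\bar C^{\,*}$. By finite-dimensionality arguments, continuous maps $\g^*\to \bar C^*$ correspond to degree $1$ maps $x:\bar C\to\g$: dualizing a discrete-to-discrete map gives a continuous pseudocompact map, and conversely via $V^{**}\cong V$. The map commutes with differentials iff it does so on generators, because both sides are derivations along the algebra map; this is where Proposition \ref{checkSV}(2) is used. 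On generators, $\dM^*_{\check{\CE}}=i\circ(\delta_1+\delta_2)$ is dual to $\dM_\g$ and the bracket. Transposing, we again get $\partial x+\frac12[x,x]_\ast=0$.

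\textbf{Main obstacle.} The hard step is the second bijection, and specifically the interaction of $\delta_2$ with the completion. The term $\delta_2$ lands in $\Sigma^{-1}\g^*\hat\otimes\Sigma^{-1}\g^*$ and is pushed into $\check S$ via $i$ from $S'(\Sigma^{-1}\g^*)$. We must verify two things:
\begin{enumerate}
\item The image of a generator under the algebra map is computed by the continuous extension, so that quadratic terms pair with $\Delta_C$ correctly. We will use the property $\Hom(S'(V),B)\cong\Hom(V,B)$ from \cite[Lemma 4.5]{Guan}, and the fact that $\Delta_C$ dualizes to the completed multiplication on $C^*$.
\item The factor $\frac12$ and the Koszul signs from the shift $\Sigma^{-1}$ match between the two sides.
\end{enumerate}
The sign bookkeeping is routine. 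The continuity and completion check is the main point to get right.
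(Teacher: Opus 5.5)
Your proposal is correct and follows the paper's own proof. The paper likewise factors the adjunction through the natural bijections $\Hom_{\bf DGLA}(\Harr(C),\g)\cong\MC(\bar C,\g)\cong\Hom_{{\CDGCog}^{\bf coaug}}(C,\check{\CE}(\g))$, using freeness of $\mathcal{L}(\Sigma^{-1}\bar C)$ on the Harrison side and pseudocompact duality with Proposition~\ref{checkSV} on the other; your reduction of differential-compatibility to generators and your evaluation of $\delta_2$ through $S'(\Sigma^{-1}\g^*)$ spell out steps the paper leaves implicit.
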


\begin{proof}
Let $C$ be a coaugmented cocommutative coalgebra and $\g$ be a dg Lie algebra. It is enough to show there are natural bijections
$$ \Hom_{\bf DGLA}(\Harr(C),\g)\cong \MC(\bar{C},\g) \cong \Hom_{\CDGCog^{coaug}}(C,\check{\CE}(\g)).$$

Forgetting the differential, any morphism of coaugmented cocommutative coalgebras $f:C \to \check{\CE}(\g)$ is dual to the map of augmented commutative pseudocompact algebras $f^{\ast}:\check{S}(\Sigma^{-1} \g^*) \to C^*$, which is equivalent to a linear map $\Sigma^{-1} \g^* \to \bar{C^*}$ by Proposition \ref{checkSV}. The linear map $\Sigma^{-1} \g^* \to \bar{C^*}$ is also dual to the map $\bar{C} \to \Sigma \g$, which can be viewed as a degree $1$ element in $\Hom(\bar{C},\g)$. Now consider the morphism of dg Lie algebras $\Psi:\Harr(C)\to \g$, which is in one-to-one correspondence with the linear map $\bar{\psi}:\Sigma^{-1} \bar{C}\to \g$ by the universal property of free Lie algebra $\mathcal{L}(\Sigma^{-1} \bar{C})$. It is also equivalent to a degree $1$ element in $\psi\in \Hom(\bar{C},\g)$. Moreover, if $\Psi:\Harr(C)\to \g$ commutes with the differential, i.e. $\dM_{\g}\circ \Psi=\Psi \circ (\dM_1+\dM_2),$ then, for all $x\in C$, we have
\begin{eqnarray*}
&&\dM_{\g}\circ\Psi(s^{-1} x)-\Psi\circ (\dM_1+\dM_2)(s^{-1} x)\\
&=& \dM_{\g}\bar{\psi}(s^{-1} x)-\bar{\psi}(\dM_1+\dM_2)(s^{-1} x)\\
&=& \dM_{\g}\psi(x)+\psi \delta_{C} (x)+ \frac{1}{2}[\psi(x_{(1)}), \psi(x_{(2)})]_{\g}\\
&=& (\partial \psi+\frac{1}{2} [\psi,\psi]_{\ast})(x)\\
&=& 0,
\end{eqnarray*}
which shows that $\psi\in \MC(\bar{C},\g)$. The bijection $\Hom_{\bf DGLA}(\Harr(C),\g)\cong \MC(\bar{C},\g)$ follows similarly.
\end{proof}

\subsection{Curved Lie algebras and the corresponding extended Chevalley-Eilenberg Functors}

In this subsection, we first recall the definition of curved Lie algebras. More details can be seen in \cite{CLM,Maunder}.

\begin{defi}
A {\bf curved Lie algebra} $(\g,\dM_{\g},\omega_{\g})$ is a graded Lie algebra $\g$ equipped with a degree $1$ derivation $\dM_{\g}$ and a degree $2$ element $\omega_{\g}\in \g^2$ such that $\dM_{\g}(\omega_{\g})=0$ and $\dM^2_{\g}(x)=[\omega_{\g},x]_{\g}$ for all $x\in \g.$ $\omega_{\g}$ is called the {\bf curvature} of $\g$. Note that a dg Lie algebra is precisely a curved Lie algebra with zero curvature.
\end{defi}

\begin{defi}\label{curved-morph}
A {\bf curved morphism} of curved Lie algebras is a pair
$$(f,a):(\g,\dM_{\g},\omega_{\g}) \to (\h,\dM_{\h},\omega_{\h}),$$
 where $f:\g \to \h$ is a graded Lie algebra morphism and a degree $1$ element $a \in \h^{1}$, called the {\bf change of curvature}, satisfying the following:
\begin{itemize}
\item[$\bullet$] $f(\dM_{\g} x)=\dM_{\h}(fx)+[a,fx]_{\g};$
\item[$\bullet$] $f(\omega_{\g})=\omega_{\h}+\dM_{\h} a+\frac{1}{2} [a,a]_{\h}.$
\end{itemize}
\end{defi}
We can compose curved morphisms by setting $(g,b)\circ (f,a)=(gf,b+g(a))$. Further, the morphism $(\Id,0)$ acts as a unit with respect to composition. We can therefore define the category $\cuLA$ of curved Lie algebras with curved morphisms.

A morphism $(f,a)$ is called uncurved (or strict) if $a=0$. Any curved morphism $(f,a)$ decomposes as the composition $(\Id,a)\circ(f,0)$ of an uncurved morphism with a curved isomorphism. Strict morphisms between dg Lie algebras correspond exactly to morphisms in $\bf DGLA$. As there can be many curved morphisms between two dg Lie algebras, the category $\bf DGLA$ does not embed fully into $\cuLA$.

\begin{rmk}
    The category of curved Lie algebras with strict morphisms can also be considered. However, by $\cuLA$, we will always be referring to curved Lie algebras with curved morphisms.
\end{rmk}

We now give the corresponding definition of Maurer-Cartan elements in curved Lie Algebras.

\begin{defi}\label{MC}
    A Maurer-Cartan element of a curved Lie algebra $(\g,\dM_{\g},\omega_{\g})$  is an element $a\in \g$ such that $\omega_{\g}+\dM_{\g}x+\frac{1}{2} [x,x]_{\g}=0.$
\end{defi}

It follows from definitions \ref{curved-morph} and \ref{MC} that an MC element of a curved Lie algebra $(\g,\dM,\omega))$ is the same as the change of curvature of the inclusion of the zero Lie algebra into $\g$. That is, there is a natural bijection $\MC(\g) \cong \Hom_{\cuLA}((0,0,0)),(\g,\dM,\omega))$. Note that $(0,0,0)$ is the terminal object of $\cuLA$, as such, MC elements act like the points of a curved Lie algebra.

Given an MC element $x$ of a curved Lie algebra $(\g,\dM,\omega)$, we can define the twisted Lie algebra $(\g^x,\dM^x)$, whose underlying Lie algebra is $\g$ and whose differential is defined by $\dM^x(y)=\dM y+[x,y]$. By direct calculation, we can see that $(\dM^x)^2=0$ and $\dM^x$ is a graded derivation, so $(\g^x,\dM^x)$ is a dg Lie algebra. Moreover, $(\Id,x):(\g^x,\dM^x,0)\to (\g,\dM,\omega)$ is a curved isomorphism. Thus, if a curved Lie algebra $(\g,\dM,\omega)$ admits a Maurer-Cartan element $x$, then it is curved isomorphic to the dg Lie algebra $\g^x$ twisted by $x$.
Similarly, if two curved Lie algebras $(\g,\dM_{\g},\omega_{\g})$ and $(\h,\dM_{\h},\omega_{\h})$ admit Maurer-Cartan elements, then for any curved morphism $(f,\alpha):(\g,\dM_{\g},\omega_{\g})\to (\h,\dM_{\h},\omega_{\h})$, we can obtain a morphism of dg Lie algebras $(\Id,y)(f,\alpha)(\Id,x):(\g^x,\dM^x_{\g})\to (\h^y,\dM^y_{\h}),$ for all $x\in \MC(\g)$ and $y\in \MC(\h)$.

Curved morphisms of curved Lie algebras induce the map between their Maurer-Cartan elements sets.
\begin{pro}\label{map-in-MCset}
Let $(f,a):(\g,\dM_{\g},\omega_{\g})\to (\h,\dM_{\h},\omega_{\h})$ be a curved morphism. Then $(f,a)$ induces $\tilde{a}:\MC(\g) \to \MC(\h)$ which sends $x\in \MC(\g)$ to $f(x)+a \in \MC(\h).$
\end{pro}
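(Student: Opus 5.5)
The plan is a direct verification. Take $x \in \MC(\g)$, so that $\omega_{\g}+\dM_{\g}x+\frac12[x,x]_{\g}=0$, and show that $y:=f(x)+a$ satisfies $\omega_{\h}+\dM_{\h}y+\frac12[y,y]_{\h}=0$. Degrees are fine: $f$ has degree $0$ and $a\in\h^1$, so $y$ has degree $1$.

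First apply $f$ to the MC equation for $x$. Since $f$ is a graded Lie algebra morphism,
\[
0=f(\omega_{\g})+f(\dM_{\g}x)+\tfrac12[f(x),f(x)]_{\h}.
\]
Now substitute the two conditions of Definition \ref{curved-morph}. The first gives $f(\dM_{\g}x)=\dM_{\h}f(x)+[a,f(x)]_{\h}$. The second gives $f(\omega_{\g})=\omega_{\h}+\dM_{\h}a+\frac12[a,a]_{\h}$. This yields
\[
0=\omega_{\h}+\dM_{\h}a+\tfrac12[a,a]_{\h}+\dM_{\h}f(x)+[a,f(x)]_{\h}+\tfrac12[f(x),f(x)]_{\h}.
\]

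Next expand $\dM_{\h}(f(x)+a)+\frac12[f(x)+a,f(x)+a]_{\h}$ by bilinearity. The only point needing care is the cross term. Both $a$ and $f(x)$ have degree $1$, so graded antisymmetry gives $[f(x),a]=-(-1)^{1\cdot1}[a,f(x)]=[a,f(x)]$. Hence $\frac12\big([a,f(x)]+[f(x),a]\big)=[a,f(x)]$. The expansion therefore coincides term by term with the right-hand side above, so $y\in\MC(\h)$.

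Finally, the assignment $x\mapsto f(x)+a$ is evidently a well-defined map of sets, so it is the map $\tilde a$. As a consistency check, it is compatible with the composition law $(g,b)\circ(f,a)=(gf,b+g(a))$: we have $g(f(x)+a)+b=gf(x)+(b+g(a))$. Equivalently, $\tilde a$ is postcomposition under the bijection $\MC(\g)\cong\Hom_{\cuLA}((0,0,0),\g)$, which gives a second proof. There is no real obstacle here; the only place an error could creep in is the sign in the cross term, which I handle via graded antisymmetry in odd degree as above.
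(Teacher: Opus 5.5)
Your proof is correct and is essentially the paper's own argument. The paper expands $\omega_{\h}+\dM_{\h}(f(x)+a)+\frac12[f(x)+a,f(x)+a]_{\h}$, substitutes the two curved-morphism identities, and collapses the result to $f(\omega_{\g}+\dM_{\g}x+\frac12[x,x]_{\g})=0$; this is your computation read in the opposite direction, and your second argument via $\MC(\g)\cong\Hom_{\cuLA}((0,0,0),\g)$ is exactly the remark the paper makes right after the proof.
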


\begin{proof}
We only need to show that $f(x)+a \in \MC(\h)$. Actually, we have
\begin{eqnarray*}
&&\omega_{\h}+\dM_{\h}(f(x)+a)+\frac{1}{2}[f(x)+a,f(x)+a]_{\h}\\
&=&f(\omega_{\g})-\frac{1}{2}[a,a]_{\h}-\dM_{\h}a+f(\dM_{\g}x)-[a,f(x)]_{\h}+\dM_{\h}a+\frac{1}{2}[f(x),f(x)]_{\h}
+\frac{1}{2}[a,a]_{\h}+[a,f(x)]_{\h}\\
&=& f(\omega_{\g}+\dM_{\g}x+\frac{1}{2}[x,x]_{\g})\\
&=& 0,
\end{eqnarray*}
Hence $f(x)+a \in \MC(\h)$ as required.
\end{proof}

\begin{rmk}
    This result also follows directly from the functoriality of $\Hom_{\cuLA}(0,-)$.   
\end{rmk}

We now extend the adjoint functors $(\Harr,\check{\CE})$ to the categories $\bf{CDGCog}$ of (non-conilpotent) cocommutative coalgebras and curved Lie algebras. We then show that $(\Harr,\check{\CE})$ still form an adjunction.

\begin{defi}
We define a pair of functors as follows:
$$ \Harr: {\CDGCog}\rightleftarrows {\cuLA}:\check{\CE}.$$
The Harrison functor sends a cocommutative coalgebra $(C,\Delta_C,\delta_C)$ to the curved Lie algebra
$$ \Harr(C):= \Big( \mathcal{L}(\Sigma^{-1} \bar{C}),~[\cdot,\cdot],\dM_{\Harr},\omega_{\Harr} \Big).$$
The differences compared to the former are the differential $\dM_{\Harr}$ and the curvature $\omega_{\Harr}$. Since $C$ is not coaugmented, we can choose a fake coaugmentation $\varepsilon:k \to C$, then we have $C\cong k \oplus \bar{C}$ as vector spaces and $\bar{C}:=\coker \varepsilon$. The differential $\dM_{\Harr}$ is induced by the restriction maps $\Delta_{\bar{C}}:\bar{C}\to \bar{C}\otimes \bar{C}$ and $\delta_{\bar{C}}:\bar{C}\to \bar{C}$, similar to the uncurved case. And the curvature $\omega_{\Harr}$ is induced by the restriction maps $\Delta_{k}:k\to \bar{C}\otimes \bar{C}$ and $\delta_{k}:k\to \bar{C}$. That means, for all $c\in \bar{C}$, we have
\begin{eqnarray*}
\dM_{\Harr}(s^{-1}c)&=&(-1)^{|c_{(1)}|+1} s^{-1}c_{(1)}\otimes s^{-1}c_{(2)}-s^{-1} \delta_{\bar{C}}(c);\\
\omega_{\Harr}&=&(-1)^{|\Delta_{(1)}|+1} s^{-1}\Delta_{(1)}\otimes s^{-1}\Delta_{(2)}-s^{-1} \delta_{k}(1_{k}),
\end{eqnarray*}
where $\Delta_{\bar{C}}(c)=c_{(1)}\otimes c_{(2)},\Delta_{k}(1_{k})=\Delta_{(1)}\otimes \Delta_{(2)}.$ Then we can check that $\dM_{\Harr}(\omega_{\Harr})=0$ and $\dM_{\Harr}^2 (s^{-1} x)=[\omega_{\Harr},s^{-1} x],$ for all $s^{-1} x \in \Sigma^{-1}\bar{C}$. Note that $\omega_{\Harr}$ is a primitive element of the bialgebra $(T(\Sigma^{-1}\bar{C}),\otimes,\Delta^{\rm cosh})$, i.e. $\Delta^{\rm cosh}(\omega_{\Harr})=\omega_{\Harr}\otimes 1_{\K}+1_{\K}\otimes \omega_{\Harr}$. Thus, $\omega_{\Harr}\in \mathcal{L}(\Sigma^{-1} \bar{C})=\Prime T(\Sigma^{-1}\bar{C}).$ Therefore,
$( \mathcal{L}(\Sigma^{-1} \bar{C}),\dM_{\Harr},\omega_{\Harr})$ is a curved Lie algebra.

The extended Chevalley-Eilenberg Functor $\check{\CE}$ sends a curved Lie algebra $(\g,[\cdot,\cdot]_{\g},\dM_{\g},\omega_{\g})$ to a cocommutative dg coalgebra $\check{\CE}(\g)$, which is the topological dual of the following commutative pseudocompact dg algebra
$ \Big( \check{S}(\Sigma^{-1} \g^*),\hat{\odot},i\circ (\delta_1+\delta_2+\delta_3)\Big).$
The only difference compared to the former is the differential adding a map $\delta_3:\Sigma^{-1} \g^* \to \K$ induced by $\omega_{\g}^*:({\g}^*)^{-2}\to \K$. We only need to show that $(\delta_1+\delta_2+\delta_3)^2=0$. Actually, the equation $(\delta_2)^2=0$ holds because of the graded Jacobi Identity of $[\cdot,\cdot]_{\g}$ and the equation $[\delta_1,\delta_2]=0$ expresses the fact that $\dM_{\g}$ is a derivation. The equation $[\delta_1,\delta_3]=0$ is equivalent to the condition $\dM_{\g}(\omega_{\g})=0$ and the equation $(\delta_1)^2+[\delta_2,\delta_3]=0$ is
equivalent to the condition $(\dM_{\g})^2=[\omega_{\g},\cdot]_{\g}$. Therefore, $(\delta_1+\delta_2+\delta_3)^2=0$, which implies that $\delta_1+\delta_2+\delta_3$ is a differential. Then we can extend it to a derivation $i\circ (\delta_1+\delta_2+\delta_3)$ on $\check{S}(\Sigma^{-1} \g^*)$.
\end{defi}

\begin{rmk}
Since any cocommutative coalgebra can be viewed as a coproduct of conilpotent cocommutative coalgebras, i.e. $C\cong \coprod\limits_{\alpha} C_{\alpha}$, we can obtain that ${\CDGCog}$ is equivalent to the formal coproduct category of conilpotent cocommutative coalgebras ${\bf CoProd(\CDGCog^{conil})}$. Then we can define the adjoint functors $(\Harr,\check{\CE})$ between
${\bf CoProd(\CDGCog^{conil})}$ and $\bf cuLA$ as follows:
\begin{eqnarray}
\label{Harr1}\Harr \left(\coprod\limits_{\alpha} C_{\alpha} \right)&=&\coprod\limits_{\alpha} \Harr(C_{\alpha});\\
\label{checkCE1} \check{\CE}(\g)&=&\coprod\limits_{x\in \g^{-1}} \CE(\g^x).
\end{eqnarray}
\end{rmk}

Similar to the uncurved case, we can also show that the functors $\Harr$ and $\check{\CE}$ still form an adjoint pair between commutative coalgebras and curved Lie algebras.

\begin{pro}\label{representable2}
There is an adjunction between the categories of cocommutative coalgebras and curved Lie algebras given by $\Harr \dashv \check{\CE}$.
\end{pro}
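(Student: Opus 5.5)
We follow the strategy of Proposition \ref{representable}: for a cocommutative coalgebra $C$ and a curved Lie algebra $\g$, we construct natural bijections
$$\Hom_{\cuLA}(\Harr(C),\g)\;\cong\;\MC(\Hom(C,\g))\;\cong\;\Hom_{\CDGCog}(C,\check{\CE}(\g)),$$
where $\Hom(C,\g)$ is now the \emph{curved} Lie algebra with convolution bracket, differential $\partial f=\dM_\g f-(-1)^{|f|}f\delta_C$ and curvature $\omega_\g\circ\epsilon_C$ (here $\epsilon_C$ is the counit). The identities $\partial^2=[\omega_\g\epsilon_C,-]_*$ and $\partial(\omega_\g\epsilon_C)=0$ are checked exactly as in the dg case. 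The only new input is that $\Delta(\epsilon_C)$-type terms produce $\omega_\g$. An MC element is then a degree $1$ map $\psi:C\to\g$ with $\omega_\g\epsilon_C+\partial\psi+\frac12[\psi,\psi]_*=0$.

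For the right-hand bijection, forget differentials first. A coalgebra map $C\to\check{\CE}(\g)$ is dual to a continuous algebra map $\check S(\Sigma^{-1}\g^*)\to C^*$. By Proposition \ref{checkSV}(1), this is the same as a continuous linear map $\Sigma^{-1}\g^*\to C^*$. Since no augmentation is imposed, the map lands in all of $C^*$, not in $\bar{C^*}$, and is therefore dual to a degree $1$ element $\psi\in\Hom(C,\g)$. Compatibility with differentials needs to be checked only on generators $\Sigma^{-1}\g^*$, by the derivation part of Proposition \ref{checkSV}(2). Dualising, the three components $\delta_1,\delta_2,\delta_3$ give $\dM_\g\psi$, $\frac12[\psi,\psi]_*$ and $\omega_\g\epsilon_C$ respectively, while $\delta_C$ gives $\psi\delta_C$. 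The equation obtained is exactly the curved MC equation.

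For the left-hand bijection, fix the fake coaugmentation $\varepsilon$, so that $C=\K\oplus\bar C$ and $\epsilon_C$ restricts to $1$ on $\K$ and to $0$ on $\bar C$ (choose $\varepsilon$ with $\epsilon_C\varepsilon=1$). A curved morphism $(\Psi,a):\Harr(C)\to\g$ consists of a graded Lie map, which by freeness is a degree $1$ map $\bar\psi:\bar C\to\g$, together with $a\in\g^1$. Assemble these into $\psi:C\to\g$ by $\psi|_{\bar C}=\bar\psi$ and $\psi(1)=a$. Next, split the MC equation for $\psi$ into its $\bar C$-component and its $\K$-component. The $\bar C$-component involves $\Delta_{\bar C}$, $\delta_{\bar C}$ and cross terms with $a$ coming from the $\K\otimes\bar C$ parts of $\Delta(c)$. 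It should become precisely $\Psi(\dM_{\Harr}x)=\dM_\g\Psi(x)+[a,\Psi x]$. The $\K$-component involves $\Delta_\K$, $\delta_\K$ and $\omega_\g$, and should become precisely $\Psi(\omega_{\Harr})=\omega_\g+\dM_\g a+\frac12[a,a]$.

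Naturality in both variables is formal, since composition of curved morphisms $(g,b)\circ(f,a)=(gf,b+g(a))$ corresponds to post-composition of $\psi$ with the curved morphism $\Hom(C,(g,b))$, as in Proposition \ref{map-in-MCset}. I expect the main obstacle to be the bookkeeping in the left-hand bijection. We must decompose $\Delta(c)$ for $c\in\bar C$ into its $\bar C\otimes\bar C$, $\K\otimes\bar C$, $\bar C\otimes\K$ and $\K\otimes\K$ parts relative to a non-multiplicative splitting, and confirm with signs that the cross terms yield exactly $[a,\Psi(-)]$. We also need to confirm that the $\K\otimes\K$ coefficient, fixed by counitality, yields the $\frac12[a,a]$ and $\dM_\g a$ terms. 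We should further check that the answer is independent of the choice of $\varepsilon$, since changing $\varepsilon$ changes $\Harr(C)$ only by a curved isomorphism. I would first do this computation carefully in the case $C=\K$, then for $C$ with $\bar C$ spanned by a single element, and only then in general.
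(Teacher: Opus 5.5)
Your route is the one the paper intends. The paper gives no separate proof of this proposition, only the remark that it is ``similar to the uncurved case'', i.e.\ representability of the Maurer--Cartan functor as in Proposition \ref{representable}. Your proposal supplies what the paper leaves implicit: the curvature $\omega_\g\epsilon_C$ on $\Hom(C,\g)$, and $\psi(\varepsilon(1))$ playing the role of the change of curvature. For $C\neq 0$ the bookkeeping you are worried about is clean. Put $e=\varepsilon(1)$ with $\epsilon_C(e)=1$. Counitality then forces $\Delta(c)=e\otimes c+c\otimes e+\Delta_{\bar C}(c)$ for $c\in\ker\epsilon_C$, and $\Delta(e)=e\otimes e+\Delta_{\K}(1)$, with $\Delta_{\bar C}(c),\Delta_\K(1)\in\ker\epsilon_C\otimes\ker\epsilon_C$. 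Also $\epsilon_C\delta_C=0$ gives $\delta_C(C)\subseteq\ker\epsilon_C$. So the cross terms contribute exactly $[a,\bar\psi(c)]$, and the $e\otimes e$ term contributes exactly $\frac12[a,a]$.

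The step that genuinely fails is $C=0$, the initial object of $\CDGCog$. No $\varepsilon$ with $\epsilon_C\varepsilon=1$ exists there, and no argument can repair this. A left adjoint would have to send $0$ to an initial object of $\cuLA$, and $\cuLA$ has none. Indeed, take any curved Lie algebra $I$ and any abelian $\h$ with zero differential, zero curvature and $\h^1\neq 0$. Then every pair $(0,a)$ with $a\in\h^1$ is a curved morphism $I\to\h$. So the statement, and your proof, should be read with $\cuLA_\emptyset$ in place of $\cuLA$, putting $\Harr(0)=\emptyset$ and $\check{\CE}(\emptyset)=0$. This is what the paper tacitly does in Section 4.

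Separately, your naturality remark only covers the variable $\g$. A coalgebra map $f:C\to D$ need not send $\varepsilon_C(1)$ to $\varepsilon_D(1)$, so $\Harr(f)$ is not strict. Its Lie part is induced by $f|_{\ker\epsilon_C}$, and its change of curvature is $s^{-1}\bigl(f\varepsilon_C(1)-\varepsilon_D(1)\bigr)$, up to your sign convention for $s^{-1}$. Either write this down and check that it corresponds to $\psi\mapsto\psi f$, or define $\Harr$ on morphisms by Yoneda from your bijection, which is natural in $\g$.
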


Recall that if $\mathcal{C}$ is a category and $c\in \mathcal{C}$ an object, then the undercategory $\mathcal{C}_{c/}$ is the category whose objects are maps $c\to x$ and whose morphisms are commutative triangles.

\begin{lem}\label{DGLA-cuLA}
There is a categorical equivalence ${\bf DGLA} \cong {\cuLA_{0/} }$.
\end{lem}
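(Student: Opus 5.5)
The plan is to write down explicit functors in both directions and check that they are mutually inverse up to natural isomorphism. Here $0=(0,0,0)$ is the zero curved Lie algebra. An object of $\cuLA_{0/}$ is a curved Lie algebra $(\g,\dM,\omega)$ together with a curved morphism $(0,x):0\to\g$. As observed after Definition \ref{MC}, such a morphism is exactly an MC element $x\in\MC(\g)$. A morphism in the undercategory from $(\g,x)$ to $(\h,y)$ is a curved morphism $(f,a):\g\to\h$ with $(f,a)\circ(0,x)=(0,y)$. By the composition rule $(g,b)\circ(f,a)=(gf,b+g(a))$, this condition reads $a+f(x)=y$, which matches Proposition \ref{map-in-MCset}.

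First I define $\Phi:{\bf DGLA}\to\cuLA_{0/}$. It sends a dg Lie algebra $\g$ to the pair $((\g,\dM,0),0)$, using that $0$ is an MC element when $\omega=0$, and sends a morphism $f$ to the strict morphism $(f,0)$. This is a functor because strict morphisms compose strictly.

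Next I define $\Psi:\cuLA_{0/}\to{\bf DGLA}$. It sends $(\g,x)$ to the twisted dg Lie algebra $\g^x=(\g,\dM^x)$. On morphisms, $(f,a):(\g,x)\to(\h,y)$ goes to the underlying map $f:\g^x\to\h^y$. The key verification is that $f$ is strict, i.e.\ $f$ commutes with the twisted differentials. Using Definition \ref{curved-morph} and $a=y-f(x)$,
\[
f(\dM^x z)=f(\dM z)+[f(x),f(z)]=\dM f(z)+[a,f(z)]+[f(x),f(z)]=\dM f(z)+[y,f(z)]=\dM^y f(z).
\]
So $f$ is a morphism of dg Lie algebras. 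This is just the composite $(\Id,-y)\circ(f,a)\circ(\Id,x)$ from the discussion after Definition \ref{MC}, written in a form that is visibly strict. Functoriality is immediate because composition of the underlying maps is ordinary composition.

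Finally I compare the composites. $\Psi\Phi(\g)=\g^0=\g$, so $\Psi\Phi=\Id$ on the nose. For $\Phi\Psi$, the object $(\g,x)$ goes to $((\g^x,\dM^x,0),0)$. I use the curved isomorphism $(\Id,x):(\g^x,\dM^x,0)\to(\g,\dM,\omega)$ from the text. It is a morphism in the undercategory, since $x+\Id(0)=x$, and its inverse is $(\Id,-x)$. Naturality in $(f,a)$ amounts to the identity
\[
(f,a)\circ(\Id,x)=(f,a+f(x))=(f,y)=(\Id,y)\circ(f,0),
\]
which follows from $a=y-f(x)$.

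There is no substantial obstacle; the main care needed is bookkeeping. One must check that the undercategory condition is exactly $a=y-f(x)$, including the sign convention for the change of curvature, and confirm that $\Phi\Psi\cong\Id$ is natural rather than just objectwise.
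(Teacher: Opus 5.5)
Your proof is correct and uses the same idea as the paper: the inclusion $\g\mapsto((\g,\dM,0),0)$, together with the curved isomorphism $(\Id,x)\colon\g^x\to\g$. The paper argues via ``fully faithful and essentially surjective'', while you construct the twisting functor $(\g,x)\mapsto\g^x$ as an explicit quasi-inverse and check naturality. This makes explicit two checks the paper leaves implicit: undercategory morphisms satisfy $a=y-f(x)$ (so $a=0$ between base points $0$, giving fullness), and $(\Id,x)$ is an isomorphism in $\cuLA_{0/}$ rather than merely in $\cuLA$.
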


\begin{proof}
Every dg Lie algebra $\g$ admits a morphism $0\to \g$ of dg Lie algebras, which is also a morphism of curved Lie algebras, so the natural inclusion ${\bf DGLA} \to {\cuLA}$ has image contained in ${\cuLA_{0/}}$. Let $i:{\bf DGLA} \to {\cuLA_{0/} }$ be the natural inclusion. We show that $i$ is a categorical equivalence. A map $0\to \g$ of curved Lie algebra is the data of a Maurer-Cartan element $x\in \g$. In this case, $\g$ is curved isomorphic to dg Lie algebra $\g^x$, which implies that $i$ is essentially surjective. To see that it is fully faithful, let $\g$ and $\h$ be two Lie algebras. Then the morphism of dg Lie algebras $f:\g \to \h$ corresponds to the map $\g \to \h$ in ${\cuLA_{0/} }$ given by a pair $(f,0)$, which is a curved Lie morphism. So $i$ is fully faithful and essentially surjective, which implies ${\bf DGLA}$ and ${\cuLA_{0/} }$ are categorical equivalent.
\end{proof}

Suppose that $L:\mathcal{C} \rightleftarrows \mathcal{D}:R$ is an adjunction. Pick $d\in \mathcal{D}$ and put $c:=Rd$. If the counit $d\to Lc$ is an isomorphism in $\mathcal{D}$, then the above adjunction can slice to an adjunction of the undercategories $L:\mathcal{C}_{c/} \rightleftarrows \mathcal{D}_{d/}:R$. If $\mathcal{C}$ and $\mathcal{D}$ are ${\CDGCog}$ and ${\cuLA}$ respectively, and $d=0$, the above counit condition is satisfied and we can obtain a sliced adjunction
$$\Harr:{\CDGCog_{k/}}\rightleftarrows {\cuLA_{0/}}:\check{\CE}.$$

\begin{pro}
The Harrison-extended Chevalley-Eilenberg adjunction
$$ \Harr: {\CDGCog}\rightleftarrows {\cuLA}:\check{\CE}$$
slices to an adjunction
$$ \Harr: {\CDGCog}^{\bf coaug}\rightleftarrows {\bf DGLA}:\check{\CE}.$$
\end{pro}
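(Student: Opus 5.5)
The argument has three parts. First, I apply the general slicing principle recalled just before the statement, taking $\mathcal{C}=\CDGCog$, $\mathcal{D}=\cuLA$ and $d=0$, the terminal curved Lie algebra $(0,0,0)$. This requires computing $c=\check{\CE}(0)$ and checking that the relevant adjunction map at $d=0$ is an isomorphism. Second, I identify the two undercategories with $\CDGCog^{\bf coaug}$ and $\bf DGLA$. Third, I check that the sliced functors agree with those of Definition \ref{first-version}.

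For the first part, $\check{\CE}(0)$ is the topological dual of $\check{S}(\Sigma^{-1}0^*)=\K$ with zero differential, so $\check{\CE}(0)=\K$, the trivial coalgebra. Next, $\Harr(\K)$: here $\K$ has a genuine coaugmentation, so $\bar{C}=0$ and $\Harr(\K)=\mathcal{L}(0)=0$ with zero curvature. The map $\Harr(\check{\CE}(0))\to 0$ is therefore a morphism $0\to 0$ in $\cuLA$. Its change of curvature lies in $0^1=0$, so it is the identity and in particular an isomorphism. (Whichever of unit or counit the slicing principle refers to, both sides are $0$ or $\K$, so the required map is forced to be an isomorphism.) Consequently $\Harr$ sends $\K\to C$ to $0=\Harr(\K)\to\Harr(C)$, and $\check{\CE}$ sends $0\to\g$ to $\K=\check{\CE}(0)\to\check{\CE}(\g)$. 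This gives the sliced adjunction $\Harr:\CDGCog_{\K/}\rightleftarrows\cuLA_{0/}:\check{\CE}$. Concretely, the hom-set bijection of Proposition \ref{representable2} is natural, so it restricts to maps compatible with the structure maps from $\K$ and from $0$. The main check is that, under this bijection, a morphism $C\to\check{\CE}(\g)$ commuting with the maps from $\K$ corresponds to a morphism $\Harr(C)\to\g$ commuting with the maps from $0$. This follows from naturality in $C$ applied to $\K\to C$, together with the fact that the adjunction map at $0$ is the identity.

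For the second part, $\CDGCog_{\K/}$ is by definition $\CDGCog^{\bf coaug}$, since a dg coalgebra map $\K\to C$ is exactly a coaugmentation. Lemma \ref{DGLA-cuLA} gives ${\bf DGLA}\simeq\cuLA_{0/}$, where an object $0\to\g$ is an MC element $x$ and $\g$ corresponds to the twisted algebra $\g^x$. An equivalence of categories transports the adjunction.

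For the third part, which I expect to be the main obstacle, the transported functors must agree with those of Definition \ref{first-version}. For $\Harr$: when $C$ is genuinely coaugmented we may choose the fake coaugmentation to be the true one. Then $\Delta_\K(1)=1\otimes1$ has no component in $\bar C\otimes\bar C$ and $\delta_\K=0$, so $\omega_{\Harr}=0$ and $\Harr(C)$ is exactly the dg Lie algebra of the earlier definition, pointed by $0$. For $\check{\CE}$: given a dg Lie algebra $\g$ pointed by the MC element $0$, the curvature term $\delta_3$ vanishes, so $\check{\CE}(\g)$ coincides with the earlier construction. Its coaugmentation is the image of $\K=\check{\CE}(0)\to\check{\CE}(\g)$, which is dual to the augmentation of $\check{S}(\Sigma^{-1}\g^*)$ killing $\Sigma^{-1}\g^*$. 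In both cases the pointings match, so the restricted adjunction is precisely the one asserted in the statement.
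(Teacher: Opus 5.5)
Your proposal is correct and follows the same route as the paper. You apply the slicing principle at the terminal object $d=0$, identify $\CDGCog_{\K/}\cong\CDGCog^{\bf coaug}$, and invoke Lemma \ref{DGLA-cuLA} for $\cuLA_{0/}\simeq{\bf DGLA}$. You also fill in details the paper leaves implicit: the computations $\check{\CE}(0)=\K$ and $\Harr(\K)=0$, which show the counit at $0$ is the identity; the naturality check that the hom-set bijection restricts to the undercategories; and the verification that the transported functors recover those of Definition \ref{first-version}.
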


\begin{proof}
This follows directly from Lemma \ref{DGLA-cuLA} and the fact that ${\CDGCog_{k/}} \cong {\CDGCog}^{\bf coaug}$.
\end{proof}

\section{The Model Structure on cocommutative coalgebras}

\subsection{Model category structure on cocommutative coalgebras}

First we recall the model structure on the category of conilpotent cocommutative coalgebras. Then we show that this category is actually combinatorial.

\begin{defi}{\rm(\cite{Hinich})}
A morphism of conilpotent cocommutative coalgebras $f:C \to D$ in ${\CDGCog^{conil}}$ is called
\begin{itemize}
\item[$\bullet$] a weak equivalence if and only if $\Harr(f):\Harr(C)\to\Harr(D)$ is a quasi-isomorphism of dg Lie algebras.
\item[$\bullet$] a cofibration if and only if it is injective.
\item[$\bullet$] a fibration if and only if it has right lifting properties with respect to acyclic cofibrations.
\end{itemize}
\end{defi}

\begin{rmk}
The weak equivalence $f:C \to D$ of conilpotent cocommutative coalgebras actually is a filtered quasi-isomorphism. Moreover, the category of conilpotent cocommutative coalgebras is dual to the category of local pseudocompact commutative dg coalgebras. See \cite[section 5.2]{GLST} for more details.
\end{rmk}

\begin{lem}\label{conil-CDGCog-model-category}
The category ${\CDGCog^{conil}}$ of conilpotent cocommutative coalgebras admits a combinatorial model category, whose generating cofibrations are the injections of finite dimensional conilpotent cocommutative coalgebras, denoted by $\I_{\CDGCog^{conil}}$.
\end{lem}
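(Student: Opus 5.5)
Since the existence of the model structure is Hinich's theorem (weak equivalences detected by $\Harr$, cofibrations the injections, fibrations defined by lifting), the content of the lemma is combinatoriality and the description of the generating cofibrations. We need two things: that ${\CDGCog^{conil}}$ is locally presentable, and that the cofibrations are exactly the retracts of relative $\I_{\CDGCog^{conil}}$-cell complexes, with generating acyclic cofibrations also given by a set.

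\emph{Local presentability.} Every conilpotent cocommutative coalgebra is the filtered union (colimit) of its finite-dimensional subcoalgebras. This is Sweedler's fundamental theorem, applied in the dg setting by closing under the differential, which preserves finite-dimensionality. Subcoalgebras of a conilpotent coalgebra are conilpotent. Finite-dimensional conilpotent coalgebras are $\aleph_0$-compact objects. One way to see this is the duality with local pseudocompact commutative dg algebras recalled in the preceding remark: a finite-dimensional object dualises to a finite-dimensional algebra, and the compactness argument already used in the proof of Proposition \ref{checkSV} applies. Alternatively, any map from a finite-dimensional coalgebra into a filtered colimit of injections factors through a stage. Isomorphism classes of finite-dimensional objects form a set. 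The category is cocomplete, since colimits are computed on underlying dg vector spaces, and finite-dimensional conilpotent coalgebras form a strong generator. Hence ${\CDGCog^{conil}}$ is locally finitely presentable.

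\emph{Cofibrantly generated.} Let $i:C\hookrightarrow D$ be an injection. I would write $D$ as a transfinite union of subcoalgebras $C=D_0\subset D_1\subset\cdots$, where each $D_{\beta+1}$ is generated by $D_\beta$ and a finite-dimensional subcoalgebra $E_\beta\subset D$, and unions are taken at limit stages. Each step is then a pushout of the generating injection $E_\beta\cap D_\beta\hookrightarrow E_\beta$. This requires checking that $D_\beta+E_\beta$ is the pushout $D_\beta\coprod_{E_\beta\cap D_\beta}E_\beta$ in conilpotent coalgebras. Since pushouts of coalgebras along injections are computed as quotients of the direct sum of underlying spaces, this reduces to linear algebra. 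So every cofibration is $\I_{\CDGCog^{conil}}$-cellular. Conversely, pushouts and transfinite compositions of injections remain injective, because colimits are created on underlying spaces. It follows that a map is an acyclic fibration exactly when it has the right lifting property against $\I_{\CDGCog^{conil}}$.

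For acyclic cofibrations I would invoke Smith's recognition theorem rather than write down a set explicitly. Its hypotheses are:
\begin{enumerate}
\item the category is locally presentable;
\item weak equivalences satisfy 2-out-of-3, which holds because $\Harr$ reflects them and quasi-isomorphisms satisfy it;
\item $\I$-injectives are weak equivalences, which is given by Hinich's model structure;
\item cofibrations that are weak equivalences are closed under pushout and transfinite composition, which also follows from Hinich;
\item the weak equivalences form an accessible subcategory of the arrow category.
\end{enumerate}

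The main obstacle is item (5). I would handle it by noting that $\Harr$ preserves filtered colimits and is accessible, and that quasi-isomorphisms of dg Lie algebras form an accessible, accessibly embedded subcategory, being the preimage of quasi-isomorphisms of chain complexes under an accessible functor. Accessibility is preserved under pullback along accessible functors (Makkai--Paré), so weak equivalences in ${\CDGCog^{conil}}$ form an accessible class. Smith's theorem then produces a set $J$ of generating acyclic cofibrations, which completes the proof that the model structure is combinatorial.
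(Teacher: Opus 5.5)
Your proof is correct and follows the same route as the paper. Both apply Jeff Smith's recognition theorem: the lifting condition and the pushout/transfinite-composition condition are inherited from Hinich's existing model structure, and accessibility of the weak equivalences comes from pulling back quasi-isomorphisms along the accessible, cocontinuous functor $\Harr$. Your explicit transfinite decomposition of an injection into pushouts of maps in $\I_{\CDGCog^{conil}}$ is a useful addition. The paper never proves that $\Cof(\I_{\CDGCog^{conil}})$ coincides with the injections, yet it relies on this when it identifies $\rlp(\I_{\CDGCog^{conil}})$ with the acyclic fibrations.
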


\begin{proof}
For the existence of the combinatorial model category, we apply Jeff Smith's Theorem \cite{Barwick,Beke}. Since all finite-dimensional conilpotent cocommutative coalgebras are small, the category ${\CDGCog^{conil}}$ is locally presentable and we know the category ${\CDGCog^{conil}}$ is model category. So it remains to check that the following three conditions:
\begin{itemize}
\item[$\bullet$] $\huaW_{\CDGCog^{conil}}$ is an accessible and accessibly embedded subcategory of the arrow category. \quad\quad To see this, first observe that $
    \huaW_{\CDGCog^{conil}}=\Harr^{-1}(\huaW_{\bf DGLA})$. Since $\huaW_{\bf DGLA}$ is the class of weak equivalences of a combinatorial model category, it is accessible and accessibly embedded by \cite[section 2.5]{Barwick} using Smith's Theorem. Moreover, $\Harr$ is certainly an accessible functor and it is cocontinous, now the claim follows from \cite[Proposition 1.18]{Beke}.
\item[$\bullet$] $\rlp(\I_{\bf Cog^{conil}})\subseteq \huaW_{\CDGCog^{conil}}$. Since ${\CDGCog^{conil}}$ is a model category, we the maps in $\rlp(\Cof(\I_{\bf Cog^{conil}}))$ are precisely acyclic fibrations by \cite[Proposition 3.13]{DS}. Thus, we have $\rlp(\I_{\bf Cog^{conil}})\subseteq \rlp(\Cof(\I_{\bf Cog^{conil}})) \subseteq \huaW_{\CDGCog^{conil}}.$
\item[$\bullet$] The class $\Cof(\I_{\bf Cog^{conil}})\cap \huaW_{\CDGCog^{conil}} $ is closed under pushouts and transfinite composition. Since ${\CDGCog^{conil}}$ is a model category, acyclic cofibrations are closed under pushouts by \cite[Proposition 3.14]{DS}.  Acyclic cofibrations are also closed under transfinite composition according to the definition of model category.
\end{itemize}
Therefore, ${\CDGCog^{conil}}$ admits the structure of a combinatorial model category.
\end{proof}

We now show that category ${\CDGCog}$ admits a combinatorial model category. We do this by showing the category of formal coproducts of a combinatorially generated model category is itself combinatorially generated. Then note that any coalgebra is a coproduct of conilpotent coalgebras, indexed by the group-like elements of the coalgebra, which gives ${\CDGCog} \cong{\bf CoProd(\CDGCog^{conil})}$. The resulting model structure on ${\CDGCog}$ will be as follows.

\begin{defi}\label{Cog-model}
A morphism of cocommutative coalgebras $f:C \to D$ in ${\CDGCog}$ is called
\begin{itemize}
\item[$\bullet$] a weak equivalence if and only if there exist a bijection between group-like elements sets $\alpha:\Gp(C)\to \Gp(D)$, such that for all $x\in \Gp(C)$, there always exists a filtered quasi-isomorphism between conilpotent parts $\conil_{x} (C)\tilde{\longrightarrow}\conil_{\alpha (x)} (D).$
\item[$\bullet$] a cofibration if and only if it is injective.
\item[$\bullet$] a fibration if and only if it has right lifting properties with respect to acyclic cofibrations.
\end{itemize}
\end{defi}

\begin{thm}\label{CoProd-Combinatorial}
    Let $\huaC$ be a combinatorially generated model category with generating cofibrations $\I_\huaC$. Then the category of formal coproducts $\bf{CoProd}(C)$ is combinatorially generated with generating cofibrations $\I_{\bf{CoProd}(\huaC)}$, where $\I_{\bf{CoProd}(\huaC)}$ denotes the maps in $\bf{CoProd}(\huaC)$ which are the images of those in $\I_\huaC$ by the inclusion functor $\huaC \to \bf{CoProd}(\huaC)$.
\end{thm}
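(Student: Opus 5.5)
The plan is to follow the template of Lemma \ref{conil-CDGCog-model-category} and apply Jeff Smith's recognition theorem \cite{Barwick,Beke} to $\bf{CoProd}(\huaC)$, with the set $\I_{\bf{CoProd}(\huaC)}$ as generating cofibrations. The model structure I have in mind is the one suggested by Definition \ref{Cog-model}. An object is a formal coproduct $\coprod_{s\in S}X_s$. A morphism $\coprod_{s\in S}X_s\to\coprod_{t\in T}Y_t$ is a function $\varphi:S\to T$ together with maps $f_s:X_s\to Y_{\varphi(s)}$ in $\huaC$. Weak equivalences are the morphisms with $\varphi$ bijective and every $f_s$ a weak equivalence in $\huaC$. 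Cofibrations are taken to be $\Cof(\I_{\bf{CoProd}(\huaC)})$.

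The first step is local presentability of $\bf{CoProd}(\huaC)$. Since $\huaC$ is locally presentable, I would use that its free coproduct completion has all small colimits. I would also use that it is generated under filtered colimits by singleton families $\{X\}$ with $X$ $\kappa$-presentable in $\huaC$; these are $\kappa$-presentable in $\bf{CoProd}(\huaC)$, because a map out of a singleton lands in a single summand.

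For accessibility of the weak equivalences, I would write $\huaW_{\bf{CoProd}(\huaC)}$ as the intersection of two classes: the arrows whose index map is bijective, which is the preimage of the isomorphisms of $\Set$ under the index functor, and the arrows all of whose components lie in $\huaW_\huaC$. Both are accessible and accessibly embedded. Then \cite[Proposition 1.18]{Beke} and closure of accessible subcategories under such limits give the first condition of Smith's theorem.

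For the condition that pushouts and transfinite composites of acyclic cofibrations are acyclic, I would compute pushouts and transfinite composites in $\bf{CoProd}(\huaC)$ componentwise. A pushout along the image of $i\in\I_\huaC$ only modifies the single summand that receives the map, and it does so by a pushout in $\huaC$. Hence the index map stays injective, and each component is a cofibration or acyclic cofibration in $\huaC$. Closure in $\huaC$ then gives closure in $\bf{CoProd}(\huaC)$.

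The main obstacle is the inclusion $\rlp(\I_{\bf{CoProd}(\huaC)})\subseteq\huaW_{\bf{CoProd}(\huaC)}$. Componentwise, a map with the right lifting property against singleton images of $\I_\huaC$ has every component in $\rlp(\I_\huaC)$, hence an acyclic fibration in $\huaC$. Injectivity of the index map should follow by lifting against the image of a map out of a coproduct of two copies of an object that are identified in the target. Surjectivity of the index map must come from lifting against the image of $\emptyset_\huaC\to X$, where $\emptyset_\huaC$ is the initial object of $\huaC$. The difficulty is that the initial object of $\bf{CoProd}(\huaC)$ is the empty family, not the singleton $\{\emptyset_\huaC\}$. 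So I would first try to show that every object of the relevant indexing admits a map from $\{\emptyset_\huaC\}$ that can be lifted. If that fails, I expect the argument to need the adjoined initial object from $\huaC$, and this step is where I am least sure the stated generating set suffices.
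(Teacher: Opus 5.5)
Your strategy is the paper's: a Smith-type recognition theorem (the paper uses \cite[Proposition A.2.6.8]{Lurie}), checking local presentability, accessibility of $\huaW$, closure of acyclic cofibrations, and $\rlp(\I)\subseteq\huaW$. The step you flagged is a genuine gap, and it cannot be closed with the stated generating set. Every element of $\I_{\mathbf{CoProd}(\huaC)}$ is a map of singletons $\{A\}\to\{B\}$, and a map out of a singleton lands in a single summand. So $(\varphi,(f_s))$ lies in $\rlp(\I_{\mathbf{CoProd}(\huaC)})$ as soon as each $f_s\in\rlp(\I_\huaC)$, with no condition on $\varphi$. Two examples show this fails to give index-bijectivity. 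The map $\emptyset\to\{Y\}$ (empty family to a singleton) has the lifting property vacuously, since no singleton maps to the empty family. The fold map $\{X\}\sqcup\{X\}\to\{X\}$ with identity components also has it. Neither is index-bijective. In fact, in any model structure with cofibrations $\Cof(\I_{\mathbf{CoProd}(\huaC)})$, every map $\emptyset\to X$ is an acyclic fibration, so two-out-of-three forces every morphism to be a weak equivalence. Your two proposed repairs both use maps outside the set. The singleton inclusion does not preserve coproducts: it sends a fold map $X\sqcup X\to X$ of $\huaC$ to a map of singletons, so no image of a map of $\huaC$ has a two-element index set. Likewise $\emptyset\to\{\emptyset_\huaC\}$ has empty source index set, so it is not an image either. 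The paper's own argument at this point does not close the gap. Its square has left leg $\emptyset\to\coprod_j D_j$, which is not in $\I_{\mathbf{CoProd}(\huaC)}$, and a lift in it would make $f$ a split epimorphism.

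What your outline does prove is the corrected statement with generating set $\I_{\mathbf{CoProd}(\huaC)}\cup\{\emptyset\to\{\emptyset_\huaC\},\ \{\emptyset_\huaC\}\sqcup\{\emptyset_\huaC\}\to\{\emptyset_\huaC\}\}$. Lifting against the two extra maps is exactly surjectivity and injectivity of the index map. Hence $\rlp$ of this set is the class of index-bijective, componentwise acyclic fibrations, which lies in $\huaW$. Your closure argument still works, because an index-bijective map in the new $\Cof$ is componentwise a cofibration of $\huaC$. One correction there: a pushout along an arbitrary map can send several summands to one, so the new component is a pushout in $\huaC$ along a coproduct of components, not a single-summand modification. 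The real cost is that the cofibrations now include non-injective fold maps. For $\huaC=\CDGCog^{conil}$ this is $k\oplus k\to k$. That map also has the right lifting property against every injective coalgebra map, since maps into $k\oplus k$ are just functions on group-likes and injections are injective on group-likes. Consequently, no model structure on $\CDGCog$ can have injections as cofibrations and weak equivalences that induce bijections on group-like elements, as in Definition \ref{Cog-model}.
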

\begin{proof}
This is an application of \cite[Proposition A.2.6.8]{Lurie}, in which the existence of a combinatorial model structure follows if the following hold:
\begin{itemize}
\item[$(1)$] The category $\bf{CoProd}(\huaC)$ is locally presentable.
\item[$(2)$] The class $\huaW$ is stable under filtered colimits.
\item[$(3)$] There is a small set $\huaW_0$ contained in $\huaW$ such that all morphisms in $\huaW$ are filtered colimits of elements in $\huaW_0$.
\item[$(4)$] The class $\huaW$ is stable under pushouts along pushouts of morphisms in $\I_{\bf{CoProd}(\huaC)}$.
\item[$(5)$] Any morphism having the right lift property with respect to all morphisms in $\I_{\bf{CoProd}(\huaC)}$ lies in $\huaW$.
\end{itemize}

For $(1)$, recall that a category is finitely presentable (in particular, locally presentable) if it is cocomplete and accessible. $\bf{CoProd}(\huaC)$ has all colimits when $\huaC$ does, and it follows from \cite[Corollary 5.3.6]{MakPar} that the coproduct category of an accessible category is itself accessible.

Conditions $(2)$ and $(3)$ are exactly saying that $\huaW \subset  \bf{CoProd}(\huaC)^{[1]}$ is a finitely accessible subcategory. Here, $\bf[1]$ denotes the category with $2$ objects and $1$ non-identity morphism.

For $(2)$, first note that limits in the category of formal products are introduced in \cite[Lemma 4.3]{CLM}. The description of colimits in the category of formal coproducts is given dually. Explicitly, say we are given a diagram $D:\huaA \rightarrow \Coprod(\huaC)$. There is an induced diagram of sets $\bar{D}:\huaA \rightarrow \Set$, where we send each $A\in \huaA$ to the indexing set of $D(A) \in \Coprod(\huaC)$. Denote by $C$ the colimit of this diagram, which is given by $$C :=\varinjlim \bar{D} \cong \coprod_{A\in \huaA}\bar{D}(A)/\sim$$
where the relation $\sim$ is generated by $i \sim \bar{D}(f)(i)$ for each morphism $f$ in $\huaA$. Each element $c\in C$ can be considered as a category whose objects are representatives of the equivalence class $c$ and, given $i,j \in c$, a morphism $i\rightarrow j$ for each morphism in $\huaA$ witnessing the relation $i\sim j$. There is then, for each $c \in C$, a diagram $D_c:c\rightarrow \huaC$ which sends to each object and morphism of $c$ its corresponding object and morphism in $\huaC$. We then get the colimit of our original diagram as
$$\varinjlim D \cong \coprod_{c\in C}\varinjlim D_c.$$

Suppose we have morphisms between two filtered diagrams $f_\alpha:C_\alpha \rightarrow C'_\alpha$ in $\bf{CoProd}(\huaC)$, so $C_\alpha = \coprod_{I_\alpha} C_i$ and $C'_\alpha = \coprod_{J_\alpha} C'_j$. Suppose further that $f_\alpha \in \huaW$ for each $\alpha$. It follows from the definition of colimits in $\bf{CoProd}(\huaC)$ that the map $I\rightarrow J$ on indexing sets of the induced map $f:\varinjlim\limits_{\alpha}C_\alpha \rightarrow \varinjlim\limits_{\alpha}C'_\alpha$ between filtered colimits will be bijective. Further, the components $f_i$ for $i\in I$ of the map $f$ will be filtered colimits of weak equivalences in $\huaC$, and will hence be weak equivalences since the model structure on $\huaC$ is combinatorial. Therefore, we have $f\in \huaW$.

For $(3)$, note that, since $\huaC$ is combinatorial, there is a small set $\huaW'_0$ such that all weak equivalences in $\huaC$ are filtered colimits of those in $\huaW'_0$. We define $\huaW_0$ as the set of weak equivalences whose indexing sets are finite and each component is a morphism in $\huaW'_0$. Since $\huaW'_0$ is a small set, $\huaW_0$ is also a small set. Each weak equivalence in $\huaW$ is then a filtered colimit over all finite subsets of its indexing set. Further, each component, being a weak equivalence in $\huaC$, is then a filtered colimit of those in $\huaW'_0$.

For $(4)$, we consider the following pushout diagram
   \[
\xymatrix{
  &\coprod\limits_{i\in I} C_{i} \ar[r]^{f=\prod\limits_{i} f_{i} }  &\coprod\limits_{i\in I} C'_{i} \\
  &D  \ar[r]^{i}  \ar[u] &D'   \ar[u]  }
   \]
where $i:D\to D'$ belongs to $\I_{\huaC}$. Note that here $\coprod\limits_{i\in I} C_{i}$ and $\coprod\limits_{i\in I} C'_{i}$ have the same index set $\I$, since there is also a pushout according to the index sets
   \[
\xymatrix{
  &I  \ar[r]^{\Id}  &I \\
  &\ast  \ar[r]^{\Id}  \ar[u] &\ast   \ar[u]  }
   \]
Then we can get the following diagram according to the components indexed by $i$
   \[
\xymatrix{
  &C_{i} \ar[r]^{f_{i}}  &C'_{i} \\
  &D  \ar[r]^{i}  \ar[u] &D'   \ar[u]  }
   \]
Since $\huaC$ is a combinatorial model category, $\huaW_{\huaC}$ is stable under pushouts along pushouts of morphisms in $\I_{\huaC}$, which implies that $f_{i}$ is a weak equivalence in $\huaC$ for each $i\in I$. Thus, $f=\prod\limits_{i} f_{i}\in \huaW$, which implies that $\huaW$ is stable under pushouts along pushouts of morphisms in $\I$.

For $(5)$, we first note that a factor of a formal coproduct is equivalent to a map from the initial object $\emptyset$ of $\huaC$, so we get the following commutative diagram for a map $f$ with the right lifting property with respect to all maps in $\I$.

\[
\xymatrix{
  &\emptyset \ar[d]_{fi} \ar[r]^{i}  &\coprod\limits_{i\in I} C_{i}
   \ar[d]^{f}   \\
  &\coprod\limits_{j\in J} D_{j}  \ar[r]^{\Id} \ar[ur]^{\exists h} &\coprod\limits_{j\in J} D_{j}}
\]

This diagram admits a lift $h$. The upper triangle gives $hfi = i$, that is, $hf = \Id_{C}$ on indexing sets. The lower triangle gives $fh = \Id_{\D}$, in particular, $fh = \Id_{J}$, so $f$ induces a bijection on indexing sets. Consider the following diagram, where $t:A \to B$ is in $\I_\huaC$, so $A$ and $B$ are indexed over the singleton set.

   \[
\xymatrix{
  &A \ar[d]_{t} \ar[r]  &\coprod\limits_{i} C_{i}
   \ar[d]^{f=\prod\limits_{i} f_{i} }   \\
  &B  \ar[r] \ar[ur]^{\exists h} &\coprod\limits_{i} D_{i}      }
\]

$A$ and $B$ will be mapped into exactly one of the components of $C$ and $D$ respectively, so we get the following diagram which can be viewed in $\huaC$.

   \[
\xymatrix{
  &A \ar[d]_{t} \ar[r]  & C_{i}
   \ar[d]^{f_{i} }   \\
  &B  \ar[r] \ar[ur]^{\exists h_{i}} &D_{i}      }
\]

Since such a commutative square exists for all generating cofibrations in $\I_{\huaC}$ and $\huaC$ is a combinatorially generated by $\I_{\huaC}$, we have $f_{i}\in \rlp(\I_{\huaC})$. It follows that $f_i$ must be a weak equivalence in $\huaC$ for all $i \in A$, and hence $f \in \huaW$ as required.
\end{proof}

\begin{cor}\label{CDGCog-model-category}
    The category $\CDGCog$ admits a combinatorial cofibrantly generated model structure given by definition \ref{Cog-model}.
\end{cor}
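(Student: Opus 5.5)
The corollary follows by combining Lemma \ref{conil-CDGCog-model-category} with Theorem \ref{CoProd-Combinatorial} and transporting along the equivalence $\CDGCog\simeq{\bf CoProd(\CDGCog^{conil})}$. I would proceed in three steps.

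\emph{Step 1: the equivalence of categories.} Over an algebraically closed field, every cocommutative coalgebra $C$ decomposes as $C\cong\coprod_{x\in\Gp(C)}\conil_x(C)$, a coproduct of conilpotent (coaugmented) coalgebras indexed by its group-like elements. Dually, $C^*$ is a product of local pseudocompact commutative algebras. A morphism $f:C\to D$ sends group-likes to group-likes. Each $\conil_x(C)$ is connected, so it maps into the single summand $\conil_{f(x)}(D)$. Hence $f$ is exactly a map of indexing sets $\Gp(C)\to\Gp(D)$ together with componentwise maps of conilpotent coalgebras, i.e.\ a morphism in ${\bf CoProd(\CDGCog^{conil})}$. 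The differential preserves each summand because it is a coderivation and each summand is a connected component. This gives $\CDGCog\simeq{\bf CoProd(\CDGCog^{conil})}$.

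\emph{Step 2: apply Theorem \ref{CoProd-Combinatorial}.} Take $\huaC=\CDGCog^{conil}$, which is combinatorial with generating cofibrations $\I_{\CDGCog^{conil}}$ by Lemma \ref{conil-CDGCog-model-category}. The theorem then produces a combinatorial model structure on ${\bf CoProd(\CDGCog^{conil})}$, which we transport to $\CDGCog$.

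\emph{Step 3: match the transported classes with Definition \ref{Cog-model}.} Weak equivalences in the coproduct category are bijections on indexing sets with componentwise weak equivalences of conilpotent coalgebras. By the remark after Hinich's definition, these componentwise maps are filtered quasi-isomorphisms, which is precisely Definition \ref{Cog-model}. Fibrations are defined by the right lifting property in both cases, so they agree once cofibrations do.

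The main obstacle is identifying the cofibrations, namely showing that they are exactly the injective maps. The cofibrations produced by the theorem are retracts of transfinite compositions of pushouts of the maps in $\I_{\bf CoProd}$ and of the maps from the initial object $\emptyset$, the zero coalgebra. Each such map is injective: in the coproduct category, pushouts are computed componentwise together with gluing of index sets, and adding new components from $\emptyset$ is injective.

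For the converse, take an injective $f:C\to D$. It is injective on group-likes, and for each $x\in\Gp(C)$ it restricts to an injective map $\conil_x(C)\to\conil_{f(x)}(D)$. The map $f$ factors as the inclusion $C\hookrightarrow C\sqcup\coprod_{y\notin f(\Gp C)}\conil_y(D)$, which is a pushout of maps from $\emptyset$, followed by a componentwise injection. Each componentwise injection is a cofibration in $\CDGCog^{conil}$, since cofibrations there are the injections. A coproduct of cofibrations is a cofibration because it is a transfinite composite of pushouts, so $f$ is a cofibration.
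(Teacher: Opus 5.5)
\textbf{The gap: the corollary as formulated is false.} Your three steps are how the paper obtains the corollary. It gives no separate proof, only Lemma \ref{conil-CDGCog-model-category}, Theorem \ref{CoProd-Combinatorial} and $\CDGCog\simeq\mathbf{CoProd}(\CDGCog^{conil})$. The argument cannot be completed, because the two classes you identify in Step 3 are incompatible.

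Take the fold map $p\colon k\oplus k\to k$, where $k\oplus k$ has two group-like elements.
\begin{itemize}
\item Since $k$ is terminal, any map $B\to k$ is the counit.
\item A map $B\to k\oplus k$ amounts to a function $\Gp(B)\to\{1,2\}$: each $\conil_y(B)$ is sent into one summand via its counit.
\item An injective map $i\colon A\to B$ is injective on group-likes. So every function $\Gp(A)\to\{1,2\}$ extends along $\Gp(i)$, and every lifting problem of $i$ against $p$ is solvable.
\end{itemize}
Hence $p$ has the right lifting property against all injections. In any model structure whose cofibrations are the injections, $p$ is then an acyclic fibration, hence a weak equivalence. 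But $p$ is not bijective on group-likes, so it is not a weak equivalence in the sense of Definition \ref{Cog-model}. So no model structure has both classes of Definition \ref{Cog-model}.

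\textbf{Where the argument breaks.} The failing step is condition (5) of Theorem \ref{CoProd-Combinatorial}, $\rlp(\I)\subseteq\huaW$, which you import without checking. The fold map lifts against every generator: a single component lands in one copy of $k$, and the lift is the counit. It also lifts against $\emptyset\to k$. So $p$ lies in $\rlp(\I)$ both for the theorem's generating set and for your enlarged one.

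In the paper's proof of (5), lifting against $\emptyset\to\coprod D_j$ only produces a section of $f$, i.e.\ surjectivity on indexing sets. The claimed $hf=\Id$ on indexing sets does not follow, since the upper triangle starts at $\emptyset$.

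Your enlargement of the generating set by maps out of $\emptyset$ was forced. Relative cell complexes on $\I_{\mathbf{CoProd}(\huaC)}$ alone are bijective on indexing sets, so $0\to k$ would not be a cofibration. That enlargement already departs from the theorem as stated. A correct statement must either enlarge the weak equivalences so that maps like $p$ become acyclic, or give up ``cofibrations $=$ injections''.

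\textbf{A secondary point.} Step 3 relies on the remark that Hinich's weak equivalences are exactly filtered quasi-isomorphisms. For the coradical filtration only one inclusion holds: filtered quasi-isomorphisms are weak equivalences, but not conversely. For example, for $C=(k[t]/t^3)^*$ the unit $C\to\CE\Harr(C)$ is a weak equivalence, yet its $\mathrm{gr}_1$ is not a quasi-isomorphism. So the componentwise condition should be stated through $\Harr$.
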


\section{Koszul duality between cocommutative coalgebras and curved Lie algebras}

Finally, in this section, we introduce the $\infty$-category structure to $\cuLA_\emptyset$ and show that the adjunction $\Harr \dashv \check{CE}$ induces an $\infty$-categorical equivalence between $\cuLA_\emptyset$ and $\CDGCog$. We use relative categories as a model for $\infty$-categories. We first recall the definitions of relative categories and their homotopy equivalences, which can be found in \cite{BarKan}.

\begin{defi}
    A {\bf relative category} is a category $\mathcal{C}$ with a subcategory $\mathcal{W}\subset\mathcal{C}$, called the weak equivalences, which contains all objects of $\mathcal{C}$. A {\bf relative functor} is a functor which preserves weak equivalences.

    A {\bf homotopy} between two relative functors $f,g:\mathcal{C}\rightarrow\mathcal{D}$ is a natural transformation $h:f\implies g$ such that all components of $h$ are weak equivalences. Equivalently, a homotopy is a relative functor $h:\mathcal{C}\times \mathbf{1}\rightarrow \mathcal{D}$ such that $h(0,C) = f(C)$ and $h(1,C) = g(C)$. Here, $\mathbf{1}$ is the two object category $\{0\rightarrow 1\}$ whose unique non-identity morphism is a weak equivalence.

    A relative functor $f:\mathcal{C}\rightarrow \mathcal{D}$ is a {\bf homotopy equivalence} if there exists a relative functor $f':\mathcal{D}\rightarrow \mathcal{C}$ such that $f'f$ is homotopic to $\Id_\mathcal{C}$ and $ff'$ is homotopic to $\Id_\mathcal{D}$.
\end{defi}

A model category with its weak equivalences forms a relative category. This is the relative category structure we will consider on $\CDGCog$. The weak equivalences on $\cuLA_\emptyset$ are as follows.

\begin{defi}
    A curved morphism of curved Lie algebras $(f,a):\g \rightarrow \h$ is called a weak equivalence if the following hold:
    \begin{itemize}
        \item[$\bullet$] The induced map $\MC(f):\MC(\g)\rightarrow \MC(\h)$ is a bijection,
        \item[$\bullet$] For every $x \in \MC(\g)$, the induced twisted morphism of dg Lie algebras $f:\g^x\rightarrow \h^{f(x)+a}$ is a quasi-isomorphism.
    \end{itemize}
\end{defi}

We show that the two functors in the adjunction $\Harr \dashv \check{CE}$ are relative functors with respect to the weak equivalences defined above.

\begin{pro}\label{preserve-we}
The functors $\Harr$ and $\check{\CE}$ preserve weak equivalences.
\end{pro}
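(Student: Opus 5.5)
The proof reduces to the conilpotent, coaugmented case by working component by component. Both functors are compatible with the decomposition ${\CDGCog}\cong{\bf CoProd(\CDGCog^{conil})}$ via (\ref{Harr1}) and (\ref{checkCE1}). On the curved side, MC elements play the role of group-like elements. Concretely, we want natural bijections $\Gp(\check{\CE}(\g))\cong \MC(\g)$ and $\MC(\Harr(C))\cong \Gp(C)$. By Proposition \ref{representable2} (adjunction) these amount to
\[
\Hom_{\CDGCog}(k,\check{\CE}(\g))\cong\Hom_{\cuLA}(\Harr(k),\g)=\Hom_{\cuLA}(0,\g)\cong\MC(\g),
\]
together with the dual statement. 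Here $\Harr(k)$ is the terminal curved Lie algebra $0$, since $\bar k=0$. We must also check that the conilpotent component of $\check{\CE}(\g)$ at the group-like element $x$ is $\CE(\g^x)$. This is the content of (\ref{checkCE1}): the point $x$ corresponds to the augmentation of $\check S(\Sigma^{-1}\g^*)$ at $x$, and recentring the Taylor expansion at $x$ turns $\delta_1+\delta_2+\delta_3$ into the differential of $\g^x$.

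For $\check{\CE}$, let $(f,a):\g\to\h$ be a weak equivalence. Its action on group-likes is the map $\MC(\g)\to\MC(\h)$, $x\mapsto f(x)+a$ of Proposition \ref{map-in-MCset}, which is a bijection by hypothesis. On the component at $x$, $\check{\CE}(f,a)$ restricts to $\CE$ applied to the dg Lie map $f:\g^x\to\h^{f(x)+a}$, which is a quasi-isomorphism by hypothesis. We then invoke the classical fact (Hinich) that $\CE$ sends quasi-isomorphisms of dg Lie algebras to filtered quasi-isomorphisms of conilpotent coalgebras. Equivalently, $\Harr\CE(\g)\to\g$ is a quasi-isomorphism, so $\CE$ reflects and creates weak equivalences in the sense of the model structure on ${\CDGCog^{conil}}$. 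This gives exactly the conditions of Definition \ref{Cog-model}.

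For $\Harr$, let $f:C\to D$ be a weak equivalence of coalgebras. Then $C=\coprod_{x}C_x$ and $D=\coprod_y D_y$, with a bijection $\alpha$ on group-likes and filtered quasi-isomorphisms $f_x:C_x\to D_{\alpha(x)}$. Via the adjunction, $\MC(\Harr(C))=\Hom_{\cuLA}(0,\Harr C)$. We need to identify this with $\Gp(C)$ naturally; this is the main obstacle, since $\Harr(C)$ is a free Lie algebra with curvature and its MC elements are not obviously visible. My first attempt would use the equivalence ${\bf DGLA}\cong\cuLA_{0/}$ of Lemma \ref{DGLA-cuLA}: an MC element of $\Harr(C)$ should be the twisting corresponding to a choice of fake coaugmentation $\varepsilon$ which is a genuine coaugmentation, i.e.\ a group-like element. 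For such $x$, the twist $\Harr(C)^{x}$ should be the coaugmented Harrison complex of $C$ at $x$, which, for a coproduct, is quasi-isomorphic to $\Harr(C_x)$: the other components contribute acyclic pieces, as in the analysis of curved Lie algebras without MC elements. For this comparison I would either argue directly with $\omega_{\Harr}$, or compute $\Harr$ of a coproduct using (\ref{Harr1}) and the fact that $\cuLA$-coproducts of curved Lie algebras with no common MC elements behave componentwise. Granting that, $\Harr(f)$ induces $\alpha$ on MC sets. On each twist it is $\Harr(f_x)$, which is a quasi-isomorphism because $f_x$ is a weak equivalence in ${\CDGCog^{conil}}$ (by definition, $\Harr$ of it is a quasi-isomorphism). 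Hence $\Harr(f)$ is a weak equivalence.
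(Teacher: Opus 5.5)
Your treatment of $\check{\CE}$ is the paper's argument. Group-likes of $\check{\CE}(\g)$ are matched with $\MC(\g)$ via the adjunction, the induced map is $x\mapsto f(x)+a$, and Hinich's result that $\CE$ turns quasi-isomorphisms into filtered quasi-isomorphisms handles each component. Your plan for $\Harr$ is also the paper's. However, you explicitly leave its only substantive step unproved (``Granting that''), namely two claims:
\begin{itemize}
\item $\MC(\Harr C)$ is naturally in bijection with $\Gp(C)$;
\item the twist of $\Harr(C)$ at the MC element attached to $x$ is quasi-isomorphic to $\Harr(C_x)$.
\end{itemize}
That is a genuine gap. Your justification, that $\cuLA$-coproducts ``behave componentwise'', is not formal. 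MC elements are maps out of the terminal object $0$, and nothing formal forces a curved morphism $0\to\coprod_x\Harr(C_x)$ to factor through a summand. The coproduct adjoins new degree-one generators $x$ with $\tilde{\dM}x=\omega_{\h}-\omega_{\g}-\frac12[x,x]$, together with mixed words. So you must rule out MC elements mixing the factors, and you must show that the other factors, which are still present in the twist at $\xi_x$, contribute nothing to cohomology.

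The paper's proof rests on three ingredients:
\begin{itemize}
\item cofibrancy of $\Harr(C_x)$, which forces $\MC(\Harr(C_x))=\{0\}$;
\item the generator-counting argument of Lemma \ref{adding-cell}, showing no new MC elements appear (in a free algebra the top component of $\xi^2$ is the square of the top component of $\xi$, hence nonzero, while the differential raises the count by at most one);
\item Lemma \ref{acyclic} and Corollary \ref{quasi-iso}, showing that after twisting at $\xi_x$ the other factors only contribute acyclic pieces $B\langle x\rangle^{-x}$.
\end{itemize}

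Your first idea can in fact be completed directly, without cofibrancy or any coproduct analysis.

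\emph{MC elements.} Grade $\mathcal{L}(\Sigma^{-1}\bar C)\subset T(\Sigma^{-1}\bar C)$ by word length. The differential $\dM_{\Harr}$ raises length by at most one, and $\omega_{\Harr}$ has length at most two. Suppose an MC element $\xi$ has top component $\xi_n$ with $n\geq 2$. Then the length-$2n$ part of the MC equation reads $\frac12[\xi_n,\xi_n]=\xi_n^2=0$, which is impossible since the tensor algebra has no zero divisors. Hence $\xi=s^{-1}c$ with $|c|=0$, where you identify $\bar C$ with the kernel of the counit. The length-one and length-two parts of the MC equation then say precisely that $g=\varepsilon(1)+c$ is a closed group-like. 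So $\Harr$ induces a natural bijection $\Gp(C)=\Hom_{\CDGCog}(k,C)\to\Hom_{\cuLA}(0,\Harr C)=\MC(\Harr C)$.

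\emph{Twists.} Twisting by $s^{-1}c$ is the same as replacing the fake coaugmentation by $g$, so $\Harr(C)^{s^{-1}c}$ is the Harrison dg Lie algebra of $(C,g)$. Let $C''$ be the sum of the other components. Then $(C,g)$ is the coproduct of $(C_g,g)$ and $(k\oplus C'',1)$ in coaugmented coalgebras, where $\Harr$ is a left adjoint. So $\Harr(C)^{s^{-1}c}\cong\Harr(C_g)\ast\Harr(k\oplus C'',1)$. The second factor is $\Harr(C'')\langle x\rangle^{-x}$; its enveloping algebra is contracted onto $k$ by the homotopy of Lemma \ref{acyclic}, so it is acyclic. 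Free products preserve quasi-isomorphisms: apply $U$, then use the K\"unneth formula and Lemma \ref{H-U-com}. Hence the inclusion $\Harr(C_g)\to\Harr(C)^{s^{-1}c}$ is a natural quasi-isomorphism. Two-out-of-three with $\Harr(f_g)$ then finishes your argument.
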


\begin{proof}
Suppose that $f:C=\coprod\limits_{x} C_{x} \to C'=\coprod\limits_{x} C'_{x}$ is a weak equivalence between cocommutative coalgebras expressed as a coproduct of conilpotent coalgebras, that is, there is a filtered quasi-isomorphism $C_{x} \tilde{\to} C'_{x}$ between conilpotent factors indexed by group-like elements $x$. By \cite[Proposition 4.4.4]{Hinich}, after applying the Harrison functor, we obtain quasi-isomorphisms of dg Lie algebras $\Harr(C_{x})\to \Harr(C'_{x})$ for each $x$. We now need to show that
$ (\Harr(f),0):\coprod\limits_{x} \Harr(C_{x}) \to \coprod\limits_{x} \Harr(C'_{x})$ is a weak equivalence between curved Lie algebras. Since $\Harr(C_{x})$  is a cofibrant dg Lie algebra for all $x$, it only has one MC element, namely, $0$. After taking the coproduct of each $\Harr(C_x)$, in the category of curved Lie algebras, these MC elements will not be equal. We denote each by $\xi_{x}$. We can define a bijection between Maurer-Cartan element sets sending $\xi_{x}$ to $\xi'_{x}$. Since there is a quasi-isomorphism between $(\coprod\limits_{x} \Harr(C_{x}))^{\xi_{x}} $ and $\Harr(C_{x})$. The same for $C'_{x}$. So we have $(\coprod\limits_{x} \Harr(C_{x}))^{\xi_{x}} $ is quasi-isomorphic to $(\coprod\limits_{x} \Harr(C'_{x}))^{\xi'_{x}} $. Thus, $\Harr$ preserves weak equivalences.

On the other hand, if $(f,a):\g \to \h$ is a weak equivalence between curved Lie algebras, there exists a bijection between Maurer-Cartan elements $\tilde{a}:\MC(\g)\to \MC(\h)$ and a quasi-isomorphism between dg Lie algebras $\g^{x} \tilde{\longrightarrow} \h^{f(x)+a}$ for all $x\in \MC(\g)$. Then we consider $\check{\CE}(\g)$ and $\check{\CE}(\h)$ respectively
\begin{eqnarray*}
\check{\CE}(\g)&=&\Big(\coprod\limits_{x\in \MC(\g)} \CE(\g^x) \Big)\coprod \Big(\coprod\limits_{x\notin \MC(\g)}\CE(\g^x) \Big);\\
\check{\CE}(\h)&=&\Big(\coprod\limits_{f(x)+a \in \MC(\h)}\CE(\h^{f(x)+a}) \Big)\coprod \Big(\coprod\limits_{y\notin \MC(\h)} \CE(\h^y) \Big).
\end{eqnarray*}
Since there is one-to-one correspondence between $\MC(\g)$ and  $\Gp(\check{\CE}(\g))$ by the adjunction $\Harr \dashv \check{\CE}$, $\tilde{a}$ induces a bijection between
$\Gp(\check{\CE}(\g))$ and $\Gp(\check{\CE}(\h))$. Moreover, by \cite[Proposition 4.4.3]{Hinich}, using the Chevalley-Eilenberg functor on $\g^{x} \tilde{\longrightarrow} \h^{f(x)+a}$, we obtain a quasi-isomorphism of conilpotent cocommutative coalgebras $\CE(\g^{x}) \tilde{\longrightarrow} \CE(\h^{f(x)+a})$. Therefore, $\check{\CE}$ preserve weak equivalences.
\end{proof}

Next we want to show that $\Harr$ and $\check{\CE}$ are mutual homotopy equivalences of relative categories. It's equivalent to show the unit $\eta:\Id_{{\CDGCog}} \Rightarrow \check{\CE}\circ\Harr $ and the counit $\varepsilon:\Harr \circ \check{\CE} \Rightarrow \Id_{{\cuLA_\emptyset}}$ exhibit homotopies. Before proving this, we need the following results on MC elements of curved Lie algebras.

We first introduce the notion of the coproduct in the category of curved Lie algebras.
\begin{defi}
The coproduct in the category of curved Lie algebras is easiest to describe in the binary case. Let $(\g,\dM_{\g},\omega_{\g})$ and $(\h,\dM_{\h},\omega_{\h})$ be two curved Lie algebras. The underlying graded Lie algebra of the coproduct $\g \coprod \h$ is a free Lie algebra generated by elements of $\g$, $\h$ and a formal element $x$ of degree $1$, denoted by $(\g \ast (\h \langle x \rangle)^{-x},\tilde{\dM},\omega_{\g})$. The differential is given by the rules:
$\tilde{\dM}|_{\g}=\dM_{\g},$ $\tilde{\dM}|_{\h}=\dM_{\h}-[x,\cdot]$ and $\tilde{\dM}(x)=\omega_{\h}-\omega_{\g}-\frac{1}{2}[x,x]$. The Maurer-Cartan elements set of $\g \coprod \h$ contains the union of sets $\{ y_i \}_{i\in I}\cup \{ z_j+x\}_{j\in J}$, where $\{ y_i \}_{i\in I}$ is $\MC(\g)$ and $\{ z_j \}_{j\in J}$ is $\MC(\h)$. The curvature of the coproduct is equal to the $\omega_{\g}$. The two inclusion curved morphisms are given by
\begin{eqnarray*}
(i_{\g},0)&:& (\g,\dM_{\g},\omega_{\g}) \to (\g \ast (\h \langle x \rangle)^{-x},\tilde{\dM},\omega_{\g});\\
(i_{\h},x)&:& (\h,\dM_{\h},\omega_{\h}) \to (\g \ast (\h \langle x \rangle)^{-x},\tilde{\dM},\omega_{\g}).
\end{eqnarray*}
For any curved Lie algebra $(\frke,\dM_{\frke},\omega_{\frke})$ and curved morphisms $(f_{\g},a):(\g,\dM_{\g},\omega_{\g}) \to (\frke,\dM_{\frke},\omega_{\frke})$,
$(f_{\h},b):(\h,\dM_{\h},\omega_{\h}) \to (\frke,\dM_{\frke},\omega_{\frke})$, there exist unique curved morphism
$$(f,a):(\g,\dM_{\g},\omega_{\g})\coprod (\h,\dM_{\h},\omega_{\h})\to (\frke,\dM_{\frke},\omega_{\frke})$$
given by $f|_{\g}=f_{\g},$ $f|_{\h}=f_{\h}$ and $f(x)=b-a$, such that the following diagram commutes.
   \[
\xymatrix{
  & &(\frke,\dM_{\frke},\omega_{\frke})& &  \\
  &(\g,\dM_{\g},\omega_{\g})  \ar[r]^{(i_{\g},0)} \ar[ur]^{(f_{\g},a)} &\g \coprod \h  \ar[u]^{(f,a)} &(\h,\dM_{\h},\omega_{\h}) \ar[l]^{(i_{\h},x)} \ar[ul]^{(f_{\h},b)} }
\]
Uniqueness of this construction is a quick check.
\end{defi}

Coproducts in the category of curved associative algebras are defined analogously in \cite[9.2]{Pos2}. We recall the definition of a cofibrant dg associative algebra, which will be helpful in describing the MC elements of coproducts of certain curved Lie algebras.

\begin{defi}
    We call a dg associative algebra $(A,d)$ {\bf{cofibrant}} if it is cofibrant in the model structure given in \cite[9.1]{Pos2}. That is, $A$ is a retract of a free algebra with generators $x_{n,\alpha}$ for $n = 0,1,2,...$ such that $d(x_n,\alpha)$ belongs to the subalgebra generated by $x_{m,\beta}$ for $m<n$.
\end{defi}

\begin{lem}\label{cofibrant-zero}
Let $A$ be a cofibrant dg associative algebra, then $A\coprod 0:=A\langle x\rangle^{-x}$ has only two Maurer-Cartan element $x$ and $0$.
\end{lem}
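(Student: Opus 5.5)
The plan is to reduce to the case where $A$ is free with cell generators, and then split an MC element of $A\langle x\rangle^{-x}$ according to the number of occurrences of $x$. As in the Lie case with $\g=0$, the coproduct $A\coprod 0$ has underlying algebra the free product $A\ast \K\langle x\rangle$. Its curvature is $0$, its differential is $\tilde{\dM}|_A=\dM_A-[x,\cdot]$, and $\tilde{\dM}(x)=-x^2$. An MC element is a degree $1$ element $y$ with $\tilde{\dM}y+y^2=0$. Both $0$ and $x$ are MC elements: for $x$, $\tilde{\dM}x+x^2=-x^2+x^2=0$. So the content of the lemma is that there are no others.

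\emph{Reduction to the free case.} Write $A$ as a retract $A\xrightarrow{\,i\,}F\xrightarrow{\,r\,}A$ of a free algebra $F=T(V)$ on generators $x_{n,\alpha}$ as in the definition of cofibrancy. The functor $-\coprod 0$ sends $i$ and $r$ to strict morphisms fixing $x$, and $\MC$ is functorial by Proposition \ref{map-in-MCset}. Hence $\MC(A\coprod 0)\to\MC(F\coprod 0)$ is injective, being split by $r$, and it sends $0\mapsto 0$ and $x\mapsto x$. It therefore suffices to prove the lemma for $F$.

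\emph{Twisting away $x$.} Since $x$ is MC, Definition \ref{MC} and the discussion after it give a curved isomorphism $(\Id,x)$ identifying $\MC(F\coprod 0)$ with $\MC$ of the twisted dg algebra $(F\langle x\rangle,\dM')$, via $y\mapsto y-x$. Here $\dM'=\tilde{\dM}+[x,\cdot]$, so $\dM'|_F=\dM_F$ and $\dM'(x)=x^2$. The claim becomes that the only MC elements of $(F\langle x\rangle,\dM')$ are $0$ and $-x$. Now grade $F\langle x\rangle=\bigoplus_{m\ge0}W_m$ by the number $m$ of occurrences of $x$ in a word. Then $\dM'$ sends $W_m$ to $W_m\oplus W_{m+1}$, and the product sends $W_m\otimes W_{m'}$ to $W_{m+m'}$.

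Write an MC element as $z=\sum_{m}z_m$ with $z_m\in W_m$, and compare components of $\dM'z+z^2=0$ degree by degree in $m$. In $W_0=F$ the equation reads $\dM_F z_0+z_0^2=0$, so $z_0\in\MC(F)$. The first key step is to show $\MC(F)=\{0\}$. I will argue by induction on the cell filtration: let $N$ be the largest index $n$ of a generator $x_{n,\alpha}$ occurring in $z_0$, and consider the component of $z_0$ of top polynomial degree in the generators of index $N$. Its square is nonzero in the free algebra unless it vanishes. On the other hand, $\dM_F$ strictly lowers the number of index-$N$ generators, since $\dM x_{N,\alpha}$ lies in the subalgebra on lower generators. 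So the top-degree part of $z_0^2$ cannot cancel against $\dM_F z_0$, forcing the top-degree component and hence $z_0$ to be $0$.

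With $z_0=0$, the $W_1$ component gives $\dM_F z_1=0$ up to terms already killed, where $z_1=\sum a\,x\,b$ with $a,b\in F$. I then run the same leading-term argument at the top nonzero $x$-count $M$ of $z$: the $W_{2M}$ component of $z^2$ is $z_M^2$, which is nonzero since $F\langle x\rangle$ is a domain. For $M\ge 2$ no term of $\dM'z$ reaches $W_{2M}$, because $2M>M+1$, so $z_M^2=0$ and hence $z_M=0$. This forces $z\in W_1$. For $z=z_1$ of degree $1$, the $W_2$ equation $(\text{the }x\mapsto x^2\text{ part of }\dM' z_1)+z_1^2=0$ together with the cell-filtration argument on the coefficients $a,b$ should pin $z_1$ down to $-x$, while the $W_1$ equation $\dM_F$-part $=0$ is consistent with this.

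I expect the main obstacle to be the cell-filtration/leading-term argument, both for $\MC(F)=\{0\}$ and for pinning down $z_1$. The reason is that $\dM$ of a generator may contain scalar or lower-length terms, so the filtration must be chosen carefully; I would first try the lexicographic order on (highest cell index, number of such generators).
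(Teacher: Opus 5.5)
Your route is the paper's. You split an MC element by the number of occurrences of $x$, and use that the top piece of $\xi^2$ sits in $x$-count $2M$ while the differential only reaches $M+1$, to reduce to $\xi_0+\xi_1$. Your additions are sound and make the argument cleaner than the paper's: the retract reduction to a cell algebra $F$ (the paper never addresses retracts), twisting by $x$ so that $\dM'|_F=\dM_F$, and the explicit proof of $\MC(F)=\{0\}$. One small correction there: $\dM_F$ does not \emph{strictly} lower the number of index-$N$ generators, since differentiating a lower-index letter preserves that number. It only fails to raise it, which is enough because $2M>M$.

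The gap is the final step, which is where the content lies. You only assert that the $W_2$ equation ``should pin $z_1$ down'', and the target should be $z_1\in\{0,-x\}$, not just $-x$. (The paper also only says ``by direct calculation'' here.) The observation you are missing is that this equation contains no $\dM_F$: for $z_1=\sum_i a_ixb_i$, the $W_2$-part of $\dM'z_1$ is $\sum_i(-1)^{|a_i|}a_ix^2b_i$. So your worry about scalar or lower-length terms in $\dM$ of generators is irrelevant, and plain word length in $F=T(V)$ works. Let $F_\ell$ be the span of words of length $\ell$. Identify $W_m\cong F^{\otimes(m+1)}$ via $w_0xw_1\cdots xw_m\mapsto w_0\otimes\cdots\otimes w_m$, and let $\Xi=\sum_i a_i\otimes b_i$ correspond to $z_1$. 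The $W_2$ equation then reads
\[
\sum_{i,j}a_i\otimes b_ia_j\otimes b_j=-\sum_i(-1)^{|a_i|}\,a_i\otimes 1\otimes b_i .
\]
Suppose $z_1\neq 0$, and let $L$ (resp.\ $L'$) be the largest word length occurring in the second (resp.\ first) tensor factor of $\Xi$. Write $\Xi_{\ast,L}$ and $\Xi_{L',\ast}$ for the parts of $\Xi$ whose second, resp.\ first, factor lies in $F_L$, resp.\ $F_{L'}$, and $\mu$ for multiplication. The component of the left side with middle factor in $F_{L+L'}$ is $(1\otimes\mu\otimes 1)(\Xi_{\ast,L}\otimes\Xi_{L',\ast})$. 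This is nonzero because concatenation $F_L\otimes F_{L'}\to F_{L+L'}$ is injective. The right side has middle factor in $F_0$, so $L=L'=0$, i.e.\ $z_1=\lambda x$. The equation becomes $(\lambda+\lambda^2)x^2=0$, so $\lambda\in\{0,-1\}$, and the $W_1$ equation is then automatic. With this step inserted, your proof is complete.
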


\begin{proof}
Since $A$ is cofibrant, we can write an MC element of $A\langle x\rangle^{-x}$ as $\xi =\sum^n\limits_{i=0} \xi_i$, where $\xi_i$ is a linear combination of words with $i$ occurrences of $x$. Then we consider the Maurer-Cartan equation $\dM_A (\sum^n\limits_{i=0} \xi_i)+(\sum^n\limits_{i=0} \xi_i)^2=0.$ Since the terms with the most occurrences of $x$ in $\dM_A (\sum^n\limits_{i=0} \xi_i)$ have $n+1$ occurrences of $x$ and for $(\sum^n\limits_{i=0} \xi_i)^2$ the highest number of occurrences of $x$ is $2n$, there can only be two terms. Namely, $\xi_0$ and $\xi_1$. Then we can assume $\xi_1=\lambda x+ax+xb+\sum\limits_{i} c_i x e_i$, where $\lambda\in \K $ and $ a,b,c_i,e_i\in A.$ By direct calculation, we can check that $\xi_0+\xi_1$ is a Maurer-Cartan element of $A\langle x\rangle^{-x}$ if and only if $\xi_0=0$ or $\xi_1=x$. Therefore, $A\langle x\rangle^{-x}$ has only two Maurer-Cartan element $0$ and $x$.
\end{proof}

If $\g$ is a cofibrant dg Lie algebra, then the universal enveloping algebra $U(\g)$ is a cofibrant dg associative algebra. Thus, $\g\langle x \rangle \subseteq U(\g) \langle x \rangle$ has only two Maurer-Cartan elements $0$ and $x$.

\begin{lem}\label{adding-cell}
Let $A$ be a free dg associative algebra and $B$ be a cofibrant dg associative algebra, then $\MC(A\ast B)=\MC(A)$, which means that adding cells to $A$ does not change Maurer-Cartan elements. Moreover, if $A$ is also cofibrant, we have $\MC(B\coprod A)=\MC(B\ast (A\langle x \rangle)^{-x})=\MC(A\langle x \rangle^{-x})=\{0,x\}.$
\end{lem}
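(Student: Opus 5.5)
The plan is to run the same ``counting occurrences'' argument as in Lemma \ref{cofibrant-zero}, now with the occurrences of the generators of $A$ as the grading. In the associative setting of the preceding lemma, an MC element satisfies $\dM\xi+\xi^2=0$.

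First, write $A$ as the free algebra $T(V)$ on a generating space $V$. Every element of $A\ast B$ is then a sum of words alternating between letters of $V$ and elements of $B$. Let $\xi\in\MC(A\ast B)$ and decompose $\xi=\sum_{i=0}^n\xi_i$, where $\xi_i$ is the part containing exactly $i$ letters from $V$ (counted with the tensor grading of $T(V)$).

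Next, compare the top components in the MC equation. The product $\xi^2$ has top component $\xi_n^2$, of $V$-length $2n$, and this is nonzero whenever $\xi_n\neq0$, since $A\ast B$ is a free product and so has no zero divisors among leading terms. The differential raises $V$-length by at most a bounded amount, which I would arrange to be $+1$ by choosing a cell filtration of the generators as in the definition of cofibrancy. The equation therefore forces $2n\le n+1$, that is $n\le 1$.

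The case $n=1$ is handled by the direct computation used in Lemma \ref{cofibrant-zero}. One checks that the linear term cannot satisfy the equation unless its coefficients vanish, using the cofibrancy filtration of $B$: the generators of $B$ have differentials landing in lower cells, so an induction on cells excludes solutions. This shows that $\xi$ lies in the image of one factor, and the cell-induction shows that the new cells contribute nothing, giving $\MC(A\ast B)=\MC(A)$.

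For the second claim, apply the first part to the factor $A\langle x\rangle^{-x}$. Since $A$ is cofibrant, $A\langle x\rangle^{-x}$ is again built freely by adding cells. This gives $\MC(B\ast(A\langle x\rangle)^{-x})=\MC(A\langle x\rangle^{-x})$, and Lemma \ref{cofibrant-zero} identifies the latter with $\{0,x\}$.

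The main obstacle is the nonvanishing of the top-degree term $\xi_n^2$ together with the bound on how far the differential raises $V$-length. Both facts rely on the free and cofibrant structure, so the cell filtration has to be chosen carefully. I would first try the filtration by cell index $n$ from the definition of cofibrant algebras.
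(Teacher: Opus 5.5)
There is a genuine gap: you grade by the wrong letters. To prove $\MC(A\ast B)=\MC(A)$ you must show that an MC element $\xi$ contains no generators of $B$. Your decomposition instead counts letters of $V$, the generators of $A$. A bound on $V$-length says nothing about $B$-content, since an element of $B$ has $V$-length $0$.

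The bound $n\le 1$ is itself unsupported. $A$ is only free as a graded algebra, so its differential is arbitrary, and the cell filtration in the definition of cofibrancy belongs to $B$, not to $A$. Even in a cell algebra the differential can raise word length by more than one, e.g.\ $|v_1|=1$, $|v_2|=2$, $\dM v_2=v_1^3$.

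Your treatment of $n=1$ also cannot work as described. Genuine MC elements of $A$ can have $V$-length $1$: for instance $x\in A\langle x\rangle^{-x}$, or $t\in k\langle t\rangle$ with $\dM t=-t^2$. So you cannot rule out the linear term by showing that ``its coefficients vanish''.

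The missing idea, which is the paper's argument, is to count occurrences of the new cells instead.
\begin{enumerate}
\item Present $B$ by generators $y_0,y_1,\dots$ with each $\dM y_k$ in the subalgebra generated by earlier ones, and adjoin them to $A$ one at a time.
\item In $A\langle y_0\rangle$, decompose $\xi=\sum_{i=0}^n\xi_i$ by the number of occurrences of $y_0$.
\item Since $\dM(A)\subseteq A$ and $\dM y_0\in A$, the differential never increases the $y_0$-count.
\item The top component of $\xi^2$ is $\xi_n^2$, of $y_0$-count $2n$. It is nonzero because a free algebra is a domain; this is where your leading-term observation is really needed.
\item The MC equation in $y_0$-count $2n$ therefore forces $2n\le n$, so $n=0$ and $\xi\in A$. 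There is no residual $n=1$ case.
\item Iterate over the cells; each element involves only finitely many of them.
\end{enumerate}

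For the second claim, the base is $A\langle x\rangle^{-x}$ and the adjoined cells come from $B$. The base is free as a graded algebra, and $\dM$ preserves it. It is \emph{not} a cell algebra, since $\dM x=-x^2$ does not lie in lower cells; indeed it has the nonzero MC element $x$. So your sentence that it ``is again built freely by adding cells'' has the roles muddled. With the roles straightened out, Lemma \ref{cofibrant-zero} gives $\{0,x\}$.
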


\begin{proof}
Since $B$ is a cofibrant dg associative algebra, we can choice a basis of $B$, denoted by $\{y_i\}_{i\in I}$. We also have the following inclusions
   \[
\xymatrix{
   &A \ar[r]^{i_0} &A\langle y_0 \rangle \ar[r]^{i_1} &A\langle y_0 \rangle\langle y_1 \rangle
    \ar[r]^{\quad i_2}  &\cdots  \ar[r]  &A\ast B
   }
\]
whose differential satisfies $\dM y_0\in A,\dM y_1\in A\langle y_0 \rangle\dots$. Next we want to show that $\MC(A\langle y_0\rangle)=\MC(A)$. Suppose that $\sum^n\limits_{i=0} \xi_i$ is a Maurer-Cartan element of $A\langle y_0\rangle$, where $\xi_i$ is a linear combination of terms with $i$ occurrences of $y_0$. Then we consider the Maurer-Cartan equation $\dM (\sum^n\limits_{i=0} \xi_i)+(\sum^n\limits_{i=0} \xi_i)^2=0.$ Since the term with the most occurrences of $y_0$ in $\dM_A (\sum^n\limits_{i=0} \xi_i)$ has $n$ occurrences and similarly $2n$ for $(\sum^n\limits_{i=0} \xi_i)^2$, we must have $n = 0$. Hence  $\xi =\xi_0\in A$, which implies that adding one cell $y_0$ over $A$ does not change Maurer-Cartan elements of $A$. Using the same method, we can have $\MC(A\ast B)=\MC(A)$, by adding cells consisting of the basis $\{y_i\}_{i\in I}$ of $B$ over $A$. Moreover, by Lemma \ref{cofibrant-zero}, if $A$ is cofibrant, we have $\MC(A\langle x \rangle^{-x})=\{0,x\}$. Then we deduce that $\MC(B\coprod A)=\MC(B\ast (A\langle x \rangle)^{-x})=\MC(A\langle x \rangle^{-x})=\{0,x\}.$
\end{proof}

\begin{lem}\label{H-U-com}
$H^{\bullet}(U\g)=U(H^{\bullet} \g)$.
\end{lem}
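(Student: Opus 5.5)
The plan is to deduce the statement from the Poincaré--Birkhoff--Witt theorem in characteristic $0$, in its symmetrization form, together with the Künneth theorem. Here $\g$ is a dg Lie algebra over $\K$, and $U\g$ carries the differential induced from $\dM_{\g}$.

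\textbf{Step 1: symmetrization.} Consider the map
$$e_{\g}: S(\g)\to U(\g),\qquad x_1\cdots x_n\mapsto \frac{1}{n!}\sum_{\sigma}\pm\, x_{\sigma(1)}\cdots x_{\sigma(n)},$$
with Koszul signs. Since $\operatorname{char}\K=0$, PBW says $e_{\g}$ is an isomorphism of graded vector spaces, in fact of graded coalgebras. It commutes with differentials: $\dM$ acts on both sides as a derivation extending $\dM_{\g}$, and symmetrization commutes with any derivation induced from $\g$. The Lie bracket plays no role in this check, because $e_{\g}$ only permutes factors and never uses the bracket. Hence $e_{\g}$ is an isomorphism of complexes $S(\g)\cong U(\g)$.

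\textbf{Step 2: homology of the symmetric algebra.} Over a field of characteristic $0$, $S^n(\g)=(\g^{\otimes n})_{\Sigma_n}$ is a direct summand of $\g^{\otimes n}$ via the symmetrizing idempotent, and this idempotent is a chain map. By Künneth, $H(\g^{\otimes n})\cong H(\g)^{\otimes n}$, compatibly with the $\Sigma_n$-action. Taking $\Sigma_n$-coinvariants, which is exact by Maschke, gives $H(S^n\g)\cong S^n(H\g)$. Combining with Step 1 yields a linear isomorphism $H(U\g)\cong S(H\g)\cong U(H\g)$.

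\textbf{Step 3: compatibility with products.} The linear isomorphism from Step 2 is not yet shown to respect multiplication, so instead I will produce an algebra map directly. The inclusion of cycles $Z(\g)\hookrightarrow \g$ induces a map of graded Lie algebras $Z\g\to Z(U\g)$, and hence an algebra map $U(Z\g)\to H(U\g)$. Boundaries $B\g$ map to boundaries, and $B\g$ is a Lie ideal of $Z\g$. Since $U(B\g)$-type elements become zero in homology, this map factors through
$$\phi: U(H\g)=U(Z\g/B\g)\to H(U\g),$$
which is a morphism of graded algebras.

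\textbf{Step 4: $\phi$ is bijective, the main obstacle.} To show this, I would compare filtrations. Both sides carry the PBW filtration, and $\phi$ is filtered. The natural tool is to compare associated gradeds, where $\operatorname{gr}\phi$ is identified with the isomorphism $S(H\g)\cong H(S\g)$ of Step 2. However, passing from $H$ of the filtered complex to the filtration on $H(U\g)$ needs care. Rather than arguing with the filtration, I will use that $e_{\g}$ splits the filtration functorially, as a chain map. Then $H(U\g)\cong \bigoplus_n H(S^n\g)$, and $\phi$ composed with $e$ matches the Step 2 isomorphism on each summand up to lower-order terms, which are triangular. This gives bijectivity of $\phi$.

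Alternatively, I would choose a splitting $\g\simeq H\g\oplus(\text{acyclic})$ as complexes. Only the underlying complex matters for $e$, so $\phi$ can be computed on symmetrized representatives directly.
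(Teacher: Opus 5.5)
Your proof is correct, and it differs from the paper's mainly because the paper gives no argument for this lemma: it only cites the literature. You prove it directly from three ingredients: the symmetrization isomorphism of complexes $e_{\g}\colon S(\g)\to U(\g)$, the K\"unneth theorem, and exactness of $\Sigma_n$-coinvariants in characteristic $0$. This is the standard argument.

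What your route adds over a bare citation is that it exhibits the isomorphism as the natural algebra map $\phi\colon U(H^{\bullet}\g)\to H^{\bullet}(U\g)$. That naturality is what the paper relies on immediately afterwards, when it reduces quasi-isomorphism of dg Lie algebras to quasi-isomorphism of their enveloping algebras. The abstract linear isomorphism from your Step~2 alone would not support that reduction.

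Two steps can be stated more precisely.

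In Step~3, say exactly why the map factors. The Lie map $Z\g\to H^{\bullet}(U\g)$ kills $B\g$, so it descends to the graded Lie algebra $H^{\bullet}\g$. The universal property of $U$ then gives $\phi$.

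In Step~4, you need no triangularity or lower-order terms. Let $\kappa\colon S(H^{\bullet}\g)\to H^{\bullet}(S\g)$, $[z_1]\cdots[z_n]\mapsto[z_1\cdots z_n]$, be your Step~2 isomorphism. Since $\phi$ is multiplicative and the Koszul signs depend only on degrees, you get $\phi\bigl(e_{H^{\bullet}\g}([z_1]\cdots[z_n])\bigr)=[e_{\g}(z_1\cdots z_n)]$. That is, $\phi\circ e_{H^{\bullet}\g}=H^{\bullet}(e_{\g})\circ\kappa$ holds exactly. The other three maps are bijections, so $\phi$ is one too, and the alternative via a splitting of $\g$ is unnecessary.

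Characteristic $0$ is used in both Step~1 and Step~2. The paper states this hypothesis only in its abstract, so you should state it explicitly.
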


\begin{proof}
See \cite[Appendix B:Proposition 2.1]{Pos1} for more details.
\end{proof}

From this Lemma, if we want to prove dg Lie algebras $\g$ and $\h$ are quasi-isomorphic, it is enough to show their universal enveloping algebras are quasi-isomorphic.

\begin{lem}\label{acyclic}
Let $A$ be an augmented free dg associative algebra. Then $H^{\bullet}(A\langle x \rangle^{-x})= k$, which means that $(A\langle x \rangle^{-x},\dM^x)$ is acyclic.
\end{lem}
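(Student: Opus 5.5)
The plan is to build an explicit contracting homotopy on the reduced part of $(A\langle x\rangle^{-x},\dM^x)$. First I fix the differential. By the description of the coproduct, $A\langle x\rangle^{-x}=A\coprod 0$ has underlying algebra the free product $A\ast k\langle x\rangle$ with $|x|=1$. The differential $\tilde{\dM}$ agrees with $\dM_A$ on $A$, and $\tilde{\dM}(x)=-x^2$, which is $-\frac12[x,x]$ in the Lie convention since $[x,x]=2x^2$ for odd $x$. Hence $x$ is MC. Twisting by $x$ means $\dM^x(y)=\tilde{\dM}y+[x,y]$. On generators this gives
$$\dM^x(a)=\dM_A a+xa-(-1)^{|a|}ax\quad (a\in A),\qquad \dM^x(x)=-x^2+2x^2=x^2 .$$
If the paper's convention instead puts $-[x,\cdot]$ on $A$, the twisted differential on $A$ is $\dM_A$ with $\dM^x x=x^2$. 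I will check which convention the coproduct definition forces; the argument below is written for the version above, where $x$ acts by commutator.

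Next, since $A$ is augmented, I write $A=k\oplus\bar A$ with $\dM_A(\bar A)\subseteq\bar A$. Then $A\ast k\langle x\rangle=k\oplus \overline{A\ast k\langle x\rangle}$, and the second summand is spanned by nonempty words in the letters $x$ and $\bar a\in\bar A$. The differential $\dM^x$ preserves the augmentation ideal, because it sends generators to elements of the ideal. So $H^\bullet=k\oplus H^\bullet(\overline{A\ast k\langle x\rangle})$, and it suffices to show that the reduced part is contractible. Freeness of $A$ will only be used to have a well-defined basis of words.

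To do this I define a degree $-1$ linear map $s$ on words. It strips a single leading $x$, namely $s(xw)=w$, and it kills every word that begins with a letter from $\bar A$ (and kills $1$). I then verify $\dM^x s+s\dM^x=\Id$ on the reduced part, in two cases.

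\emph{Words $w=\bar a\,w'$ starting in $\bar A$.} Here $s(w)=0$. The Leibniz expansion of $\dM^x(w)$ contains exactly one word beginning with $x$, namely $x\bar a w'$, which comes from the $x\bar a$ term of $\dM^x\bar a$. Every other term begins with a letter of $\bar A$, because $\dM_A\bar a\in\bar A$ and the other terms keep $\bar a$ in front. Therefore $s\dM^x(w)=w$.

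\emph{Words $w=xw'$.} Here $\dM^x(xw')=x^2w'-x\,\dM^xw'$. Applying $s$ gives $xw'-\dM^xw'$, while $\dM^x s(w)=\dM^xw'$, so the two sum to $w$. The case $w'=1$ is covered by the same formula, since $\dM^x(x)=x^2$ and $s(x^2)=x$.

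The step I expect to be the main obstacle is getting the signs exactly right, since the homotopy identity depends on them. This includes the Koszul signs in $s$ (possibly $s(xw)=\pm w$), the sign with which $\dM^x$ passes $x$, and the relative sign between $xa$ and $ax$. If the naive $s$ is off by a sign, I will first try inserting $(-1)^{|w|}$ into $s$. As a fallback I will filter by word length in $x$, where the associated graded is $A\ast(k\langle x\rangle,\dM x=x^2)$. The computation $\dM(x^n)=x^{n+1}$ for $n$ odd and $0$ for $n$ even shows that $k\langle x\rangle$ has cohomology $k$. A Künneth/spectral sequence argument on the alternating tensor decomposition of the free product would then reduce the statement to the leading-$x$ homotopy above.
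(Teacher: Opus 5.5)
Your proof is correct and is essentially the paper's own argument. The paper uses the same homotopy ($s(a)=0$, $s(xb)=b$, $s(axb)=0$) and the same identity $\dM^x s+s\dM^x=\Id-p$, and your sign check confirms that $s(xw)=w$ works with no extra sign, so the fallback spectral-sequence argument is unnecessary. Your convention, with $x$ acting on $\bar A$ by the commutator (i.e.\ $A\coprod 0$ twisted at the MC element coming from the factor $0$), is the right one: it is what the paper's homotopy implicitly requires, whereas under the other reading ($\dM^x|_A=\dM_A$, $\dM^x x=x^2$) the cohomology would be $H^\bullet(A)$ rather than $k$.
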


\begin{proof}
We directly construct a chain homotopy $s:A\langle x \rangle^{-x} \to A\langle x \rangle^{-x}$ between the Identity map $\Id$ and projection map $p$ such that
$$s: a\mapsto 0,\quad xb \mapsto b, \quad axb \mapsto 0, \quad \forall a\in A,b\in A\langle x \rangle.$$
We can check $\dM^x s+s\dM^x=\Id-p$, which implies that $(A\langle x \rangle^{-x},\dM^x)$ is acyclic.
\end{proof}

\begin{cor}\label{quasi-iso}
Let $A$ be a cofibrant dg associative algebras and $B$ be an augmented free dg associative algebras. Then $H^{\bullet}(A\amalg B)=H^{\bullet}(A).$
\end{cor}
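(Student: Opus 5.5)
The plan is to identify the coproduct $A\amalg B$ with the twisted algebra $A\ast (B\langle x\rangle)^{-x}$, by analogy with the binary coproduct of curved Lie algebras and \cite[9.2]{Pos2}. Here the twist is by the MC element $0$ coming from $A$, so the differential restricts to $\dM_A$ on $A$, to $\dM_B-[x,\cdot]$ on $B$, and to $\tilde{\dM}(x)=-x^2$ on $x$. The goal is to show that the inclusion $A\to A\ast (B\langle x\rangle)^{-x}$ is a quasi-isomorphism.

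The first step is to use Lemma \ref{acyclic}. It says that the factor $C:=(B\langle x\rangle)^{-x}$, twisted back by $x$, is acyclic; more precisely $H^\bullet(C)=k$, with the unit as the only class. I would like to run the argument with $C$ itself rather than with its $x$-twist. The two differ by the curved isomorphism $(\Id,x)$, so I would first check that the homotopy $s$ of Lemma \ref{acyclic} (namely $xb\mapsto b$, and $0$ on other words) also contracts $C$ onto $k$ with the differential $\tilde{\dM}$. If that fails, I would instead twist the whole coproduct by the MC element $x$ and compare: for twists of a dg algebra by a gauge-trivial MC element, the cohomology is unchanged. In either case the outcome is an augmented dg algebra $C$ with a contracting homotopy $s$ onto $k\cdot 1$ satisfying $\dM s+s\dM=\Id-p$, where $p$ is the augmentation.

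The second step is to compute the cohomology of the free product $A\ast C$. As a complex,
\[
A\ast C\cong A\oplus \bigoplus_{n\ge1}\big(\bar{C}\otimes\bar A\otimes \bar C\otimes\cdots\big),
\]
a sum over alternating words in $\bar A=A/k$ and $\bar C$. This is valid because $A$ is cofibrant and hence augmented, with the free generators spanning a complement. The differential is not the tensor-product differential, since $\dM(x)=-x^2$ and $\dM_B-[x,\cdot]$ can change word length. I would therefore filter by the number of $\bar A$-letters, or more robustly by the cell filtration of $A$, which the cofibrancy of $A$ provides. On the associated graded the differential acts letterwise. Since $\bar C$ is acyclic (a consequence of $H^\bullet(C)=k$), a Künneth argument shows that every word containing a $\bar C$ letter contributes zero cohomology, which leaves $H^\bullet(A)$.

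The third step is convergence. The cell filtration of $A$ is exhaustive and bounded below in each filtration degree, so the spectral sequence converges, and the comparison map from $A$ is therefore a quasi-isomorphism. This is exactly where the hypothesis that $A$ is cofibrant is used, in the same way that the cell-by-cell induction is used in Lemma \ref{adding-cell}.

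I expect the main obstacle to be the first step: the interaction between the twisting by $-x$ and the contraction, that is, confirming that $C$ rather than its twist is contractible to $k$. A secondary difficulty is choosing a filtration under which the non-homogeneous terms $[x,\cdot]$ and $x^2$ in the differential drop to lower filtration. My first attempt would be to filter by the total number of letters from $A$'s generators, weighted by cell index.
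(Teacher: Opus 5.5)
Your route is the paper's: you identify $A\amalg B$ with the free product $A\ast C$, $C=(B\langle x\rangle)^{-x}$, use Lemma \ref{acyclic} for $H^\bullet(C)=k$, and conclude by K\"unneth for free products, $H^\bullet(A)\ast H^\bullet(C)=H^\bullet(A)$. Two corrections: Lemma \ref{acyclic} already concerns $C$ itself with the differential $\tilde{\dM}$ (its homotopy, up to sign, contracts $C$ onto $k$), while your fallback would fail because $x$ is not gauge-trivial ($C^x$ is the free product of $B$ with $(k\langle x\rangle,\ \dM x=x^2)$ and has cohomology $H^\bullet(B)$); and since the augmentation of $C$ is a dg map, $-x^2$ and $-[x,\cdot]$ stay inside the subcomplex $\bar C$, so for augmented $A$ the alternating-word decomposition is already a splitting of complexes and no filtration or convergence argument is needed.
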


\begin{proof}
By Lemma \ref{acyclic}, we have
$$ H^{\bullet}(A\amalg B)=H^{\bullet}(A\ast (B\langle x \rangle)^{-x})=H^{\bullet}(A)\ast H^{\bullet} (B\langle x \rangle^{-x})=H^{\bullet}(A)   $$
\end{proof}

\begin{pro}\label{Quillen-Equ1}
For any curved Lie algebra $\g$, $\Harr \check{\CE} (\g)$ is weak equivalent to $\g$.
\end{pro}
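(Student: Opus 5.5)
The plan is to show that the counit $\varepsilon_\g:\Harr\check{\CE}(\g)\to\g$ is a weak equivalence of curved Lie algebras, in the sense defined above. We check the two conditions separately: a bijection on Maurer--Cartan sets, and a quasi-isomorphism after twisting at every MC element.

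\textbf{Step 1: decompose $\Harr\check{\CE}(\g)$.} By the description \eqref{checkCE1}, $\check{\CE}(\g)$ splits as a coproduct of conilpotent coalgebras indexed by its group-like elements. By the adjunction of Proposition \ref{representable2}, these group-like elements correspond to $\MC(\g)$. Hence
$$\check{\CE}(\g)\cong\coprod_{x\in\MC(\g)}\CE(\g^x).$$
By \eqref{Harr1},
$$\Harr\check{\CE}(\g)\cong\coprod_{x\in\MC(\g)}\Harr\CE(\g^x),$$
where the coproduct is taken in $\cuLA$. Each factor $\Harr\CE(\g^x)$ is a cofibrant dg Lie algebra. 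By the classical Hinich--Quillen theory, the classical counit $\Harr\CE(\g^x)\to\g^x$ is a quasi-isomorphism. The counit $\varepsilon_\g$ restricted to the $x$-th factor is the curved morphism $\Harr\CE(\g^x)\to\g^x\xrightarrow{(\Id,x)}\g$. If $\MC(\g)=\emptyset$, both sides have no MC elements and the claim is vacuous. The empty coproduct is the adjoined initial object $\emptyset$ of $\cuLA_\emptyset$, and it is mapped in precisely when there are no MC elements, so this case must be handled there.

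\textbf{Step 2: MC elements of the coproduct.} We compute $\MC$ of the coproduct using its explicit description as $\g_1\ast(\g_2\langle x\rangle)^{-x}\ast\cdots$. To do this we pass to universal enveloping algebras, which are cofibrant associative algebras. Lemma \ref{cofibrant-zero} and Lemma \ref{adding-cell}, applied inductively (transfinitely for infinitely many factors, via a filtered colimit of finite coproducts), show that the MC elements are exactly $0$ in the base factor and the formal elements $x_\alpha$ attached to the other factors. In other words, there is exactly one MC element $\xi_x$ per factor. The counit sends $\xi_x$ to $0+x=x$ by Proposition \ref{map-in-MCset}, so $\MC(\varepsilon_\g)$ is a bijection. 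Two facts need care: MC elements of the Lie algebra sit inside those of its enveloping algebra, and the Lie MC equation $\dM\xi+\frac12[\xi,\xi]=0$ matches the associative one $\dM\xi+\xi^2=0$.

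\textbf{Step 3: quasi-isomorphism at each MC element.} Fix $x$. Twisting the coproduct by $\xi_x$ is the same as re-basing the coproduct at the $x$-th factor, by uniqueness of coproducts up to curved isomorphism. This gives the dg Lie algebra
$$\Harr\CE(\g^x)\ast\coprod_{y\neq x}\bigl(\Harr\CE(\g^y)\langle x_y\rangle\bigr)^{-x_y}.$$
By Lemma \ref{H-U-com} it suffices to compare enveloping algebras. Corollary \ref{quasi-iso} (using that each $U\Harr\CE(\g^y)$ is an augmented free algebra) together with Lemma \ref{acyclic} shows that the extra factors are acyclic. Therefore the inclusion of $\Harr\CE(\g^x)$ is a quasi-isomorphism. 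The twisted counit is this inclusion followed by the classical counit $\Harr\CE(\g^x)\to\g^x$, so it is a quasi-isomorphism.

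\textbf{Main obstacle.} I expect the main difficulty to be Step 2 together with the re-basing in Step 3. This requires controlling MC elements and cohomology in infinite free products with the twisted generators $x_y$. It also requires checking that the passage between Lie and associative settings (via Lemma \ref{H-U-com} and the inclusion $\g\subseteq U\g$) preserves both the MC sets and the quasi-isomorphism. My first approach is to reduce to finite coproducts and take filtered colimits, using that cohomology and MC sets commute with filtered colimits of free-product inclusions.
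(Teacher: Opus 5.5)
\textbf{There is a genuine gap in Step~1.} The rest of your strategy (passing to enveloping algebras, counting MC elements with Lemmas~\ref{cofibrant-zero} and~\ref{adding-cell}, and using Corollary~\ref{quasi-iso} with Lemma~\ref{H-U-com}) follows the paper. But the formula $\check{\CE}(\g)\cong\coprod_{x\in\MC(\g)}\CE(\g^x)$ is false.

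The splitting of a cocommutative coalgebra into conilpotent pieces is indexed by \emph{all} group-like elements of the underlying graded coalgebra. The adjunction of Proposition~\ref{representable2} only identifies $\MC(\g)$ with dg coalgebra maps $\K\to\check{\CE}(\g)$, that is, with the group-likes that are cycles. For $\check{\CE}(\g)$ the group-likes are the continuous algebra maps $\check{S}(\Sigma^{-1}\g^*)\to\K$. By Proposition~\ref{checkSV} these are arbitrary functionals on the generators, i.e.\ arbitrary elements $y$ of $\g$ in the Maurer--Cartan degree. This is why \eqref{checkCE1} is indexed by a whole graded piece of $\g$ and not by $\MC(\g)$. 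The summand at $y$ is $\CE(\g^y)$, where $\g^y$ has curvature $\omega_\g+\dM_\g y+\frac12[y,y]$; this curvature is nonzero exactly when $y\notin\MC(\g)$.

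Two examples show the failure:
\begin{itemize}
\item Take $\g=L\langle h\rangle$. Then $\MC(\g)=\emptyset$, so your formula gives $\check{\CE}(\g)=0$. In fact $\check{\CE}(\g)$ is two-dimensional, spanned by a group-like $g$ and an element $p$ with $\dM g=\pm p$. Hence $\Harr\check{\CE}(\g)\cong\g$, not $\emptyset$.
\item Take $\g=\K e\oplus\K f$ with $|e|=1$, $\dM e=f$ and zero bracket. Then $\MC(\g)=\{0\}$. Yet $\check{S}(\Sigma^{-1}\g^*)$ is built on the pseudocompact completion $\prod_{\lambda\in\K}\K[[\epsilon-\lambda]]$ of $\K[\epsilon]$, where $\epsilon$ is the generator dual to $e$. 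So $\check{\CE}(\g)$ has a summand $\CE(\g^{\lambda e})$, with curvature $\lambda f$, for every $\lambda\in\K$.
\end{itemize}

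Consequently $\Harr\check{\CE}(\g)$ also contains the genuinely curved factors $\Harr\CE(\g^y)$ for $y\notin\MC(\g)$, glued in through formal elements $x_y$. Your Steps~2 and~3 never deal with them. They are exactly the second coproduct in the paper's formula for $U(\Harr\check{\CE}(\g))$. The paper keeps them and disposes of them by asserting that each such $U(\Harr\CE(\g^y))$ is a cofibrant curved algebra without MC elements.

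The lemmas you cite do not cover these factors. Lemmas~\ref{cofibrant-zero}, \ref{adding-cell}, \ref{acyclic} and Corollary~\ref{quasi-iso} concern uncurved cofibrant or augmented free dg algebras. In $\Harr\CE(\g^y)$ the MC equation has a constant curvature term. Moreover, the component of the Chevalley--Eilenberg differential that multiplies by the (suspended) curvature raises the coradical filtration. So the occurrence-counting argument does not go through as stated. You need to add:
\begin{enumerate}
\item[(a)] that these curved factors contribute no MC elements to the coproduct;
\item[(b)] that after twisting at a genuine point $\xi_x$, each glued factor $\bigl(U\Harr\CE(\g^y)\langle x_y\rangle\bigr)^{-x_y}$ is still acyclic, so the twisted cohomology is that of $U\Harr\CE(\g^x)$.
\end{enumerate}

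Finally, the case $\MC(\g)=\emptyset$ cannot be routed through the adjoined $\emptyset$, because $\check{\CE}(\g)$ always has the summand at $y=0$. Argue instead, as the paper does: the counit $\Harr\check{\CE}(\g)\to\g$ is a curved morphism and $\MC$ is functorial, so $\MC(\Harr\check{\CE}(\g))=\emptyset$.
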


\begin{proof}
If $\g$ has no Maurer-Cartan elements, then $\Harr \check{\CE} (\g)$ also has no Maurer-Cartan elements, since $\MC$ is a functor from the category of curved Lie algebras to the category of sets and $\Harr\check{\CE}_{\g}:\Harr\check{\CE}(\g)\to \g$ is the component of the counit. Thus $\Harr \check{\CE} (\g)$ is weak equivalent to $\g$ naturally.
If $\g$ has Maurer-Cartan elements, we consider the universal enveloping algebra of $\Harr \check{\CE} (\g)$, we have
$$ U(\Harr \check{\CE} (\g))=U\Big(\Harr \Big(\coprod\limits_{x\in \g^{-1}} \CE(\g^x)\Big)\Big)= \Big(\coprod \limits_{x_i\in \MC(\g)} U(\Harr \CE(\g^{x_i}))\Big)\coprod \Big(\coprod \limits_{x \notin \MC(\g)} U(\Harr \CE(\g^x))\Big).$$
For any $x\notin \MC(\g)$, $U(\Harr \CE(\g^x))$ is a cofibrant curved associative algebra, which has no Maurer-Cartan elements. Moreover, for any $x_i\in \MC(\g)$, $U(\Harr \CE(\g^{x_i}))$ is a
cofibrant dg associative algebra, which has only one Maurer-Cartan element $0$. Then by induction and Lemma \ref{adding-cell}, we can obtain that $\Big(\coprod \limits_{x_i\in \MC(\g)} U(\Harr \CE(\g^{x_i}))\Big)$ and $U(\g)$ have the same number of Maurer-Cartan elements, which implies that $\Harr \check{\CE} (\g)$ and $\g$ have the same number of Maurer-Cartan elements.

Moreover, we denote the Maurer-Cartan element of $U(\Harr \CE(\g^{x_i}))$ by $0_{x_i}$, for all  $x_i\in \MC(\g)$. Then we define a bijection map $\tilde{a}$ from $\MC(U(\Harr \check{\CE} \g))$ to $\MC(U\g)$, sending $0_{x_i}$ to $x_i$. Then by Lemma \ref{quasi-iso}, we have
$$ H^{\bullet}\Big(\Big(  U(\Harr \check{\CE} (\g^{x_i}))\coprod \Big(\coprod_{x\neq x_i}  U\Harr \check{\CE} (\g^{x}) \Big)  \Big)^{0_{x_i}}\Big)=H^{\bullet}\Big(  U(\Harr \check{\CE} (\g^{x_i})) \Big)=H^{\bullet}(U\g^{x_i}).$$
Then by Lemma \ref{H-U-com}, we have $ H^{\bullet}(\Harr \check{\CE} \g)^{0_{x_i}}=H^{\bullet}(\g^{x_i})$, for all $0_{x_i}\in \MC(\Harr \check{\CE} \g).$ Therefore,
$\Harr \check{\CE} (\g)$ is weak equivalent to $\g$.
\end{proof}

\begin{pro}\label{Quillen-Equ2}
For any cocommutative coalgebra $C$, $C$ is weakly equivalent to $\check{\CE}\Harr (C)$.
\end{pro}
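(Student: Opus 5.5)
The plan is to show that the unit component $\eta_C:C\to\check{\CE}\Harr(C)$ is a weak equivalence in the sense of Definition \ref{Cog-model}. This means checking two things: that $\eta_C$ induces a bijection on group-like elements, and that it restricts to filtered quasi-isomorphisms on the corresponding conilpotent parts.

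\textbf{Reduction to conilpotent pieces.} Decompose $C\cong\coprod_{x\in\Gp(C)}C_x$ into conilpotent coalgebras $C_x=\conil_x(C)$, each coaugmented by $x$. By (\ref{Harr1}), $\Harr(C)\cong\coprod_x\Harr(C_x)$ in $\cuLA$. By (\ref{checkCE1}), $\check{\CE}\Harr(C)\cong\coprod_{y}\CE(\Harr(C)^{y})$, where the non-curved summands come from the Maurer--Cartan elements $y\in\MC(\Harr(C))$.

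\textbf{Group-like elements.} By the adjunction of Proposition \ref{representable2}, a group-like element of $\check{\CE}(\g)$ is the same as a map $k\to\check{\CE}(\g)$. That is the same as a map $\Harr(k)=(0,0,0)\to\g$, hence an element of $\MC(\g)$. So $\Gp(\check{\CE}\Harr(C))\cong\MC(\Harr(C))$. I then compute $\MC(\coprod_x\Harr(C_x))$ in the same way as in the proof of Proposition \ref{Quillen-Equ1}. Each $\Harr(C_x)$ is a cofibrant dg Lie algebra, and its only MC element is $0$. Passing to universal enveloping algebras and applying Lemma \ref{cofibrant-zero} and Lemma \ref{adding-cell} inductively over the factors (transfinitely if necessary), the MC elements of the coproduct are exactly the elements $\xi_x$, one for each $x$: the image of $0$ under the structure morphism $(i_x,\xi_x)$. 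Under this identification, $\eta_C$ sends $x\mapsto\xi_x$, since $\eta_C(x)$ corresponds to the composite $k\to C\to\check{\CE}\Harr(C)$, i.e. to $\Harr(k\xrightarrow{x}C)$. This gives the bijection $\Gp(C)\to\Gp(\check{\CE}\Harr(C))$.

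\textbf{Conilpotent parts.} I restrict $\eta_C$ to $C_x$. It lands in the component $\CE(\Harr(C)^{\xi_x})$, and it factors as the classical unit $C_x\to\CE\Harr(C_x)$ followed by $\CE$ applied to the twisted inclusion $\Harr(C_x)\to\Harr(C)^{\xi_x}$. By Hinich \cite{Hinich}, the classical unit is a filtered quasi-isomorphism. By \cite[Proposition 4.4.3]{Hinich}, $\CE$ sends quasi-isomorphisms of dg Lie algebras to filtered quasi-isomorphisms. So it remains to show that $\Harr(C_x)\to\Harr(C)^{\xi_x}$ is a quasi-isomorphism. After twisting by $\xi_x$, the coproduct has the form $\Harr(C_x)\ast\bigl(\coprod_{x'\neq x}\Harr(C_{x'})\bigr)\langle z\rangle^{-z}$. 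By Lemma \ref{H-U-com} it suffices to check this on $U$. Here Corollary \ref{quasi-iso} (built on Lemma \ref{acyclic}) gives $H^\bullet(U\Harr(C)^{\xi_x})=H^\bullet(U\Harr(C_x))$, and I need to verify that this isomorphism is induced by the inclusion.

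\textbf{Main obstacle.} The hardest step is this last cohomology computation. The difficulty is that the other factors $\Harr(C_{x'})$ appear twisted by the non-MC element $-z$ relative to their own basepoints, so they are curved. I will first treat the binary case, where the contracting homotopy of Lemma \ref{acyclic} applies directly. The infinite coproduct then follows by writing it as a filtered colimit of finite coproducts, since homology commutes with filtered colimits.
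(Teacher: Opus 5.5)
Your proposal follows essentially the same route as the paper. You decompose $C$ over its group-like elements and use the adjunction to identify $\Gp(\check{\CE}\Harr(C))$ with $\MC\bigl(\coprod_x\Harr(C_x)\bigr)$, with one MC element per cofibrant factor; you then compare conilpotent parts by applying $\CE$ to the quasi-isomorphism $\Harr(C_x)\to\bigl(\coprod_{x'}\Harr(C_{x'})\bigr)^{\xi_x}$ and composing with Hinich's unit $C_x\to\CE\Harr(C_x)$. You are more careful than the paper, which only asserts that the bijection and the quasi-isomorphism exist: you check that both are induced by $\eta_C$, and you say how the twisted-coproduct cohomology computation is carried out (Lemma \ref{acyclic} in the binary case, then a filtered colimit).
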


\begin{proof}
For any cocommutative coalgebra $C$, we have
$$ \Harr(C)=\coprod \limits_{x\in \Gp(C)} \Harr(C_x) $$
Then, after applying the  extended Chevalley-Eilenberg Functor $\check{\CE}$, we have
$$ \check{\CE} \Harr(C)=\coprod\limits_{0_x\in \MC(\Harr C)} \CE\Big(\coprod \limits_{x\in \Gp(C)} \Harr(C_x)^{0_x}\Big)$$
where for all $x\in \Gp(C) $, $\Harr(C_x)$ is a cofibrant Lie algebra, which only has one Maurer-Cartan element $0_x$. Note that there is one-to-one correspondence between $\MC(\Harr C)$ and  $ \Gp(\check{\CE}\Harr (C))$, since $(\Harr,\check{\CE})$ is a adjunction. We denote $\alpha(x)\in \Gp(\check{\CE}\Harr (C))$, as the transpose of $0_x\in  \MC(\Harr C)$.
Thus, $\alpha$ is a bijection between $\Gp(C)$ and $\Gp(\check{\CE}\Harr (C))$.

Furthermore, since there is a quasi-isomorphism between $(\coprod \limits_{x\in \Gp(C)} \Harr(C_x))^{0_x}$ and $\Harr(C_x)$ as dg Lie algebras. Then using the Chevalley-Eilenberg Functor $\CE$, $\CE(\coprod \limits_{x\in \Gp(C)} \Harr(C_x))^{0_x}$ is filtered quasi-isomorphic to $\CE(\Harr C_x)$ as conilpotent cocommutative coalgebras. The latter is also weak equivalent to $C_x$ as conilpotent ones. Therefore, $C$ is weakly equivalent to $\check{\CE}\Harr (C)$.
\end{proof}

\begin{thm}
The adjunction $\Harr\dashv\check{\CE}$ determines an equivalence between $\infty$-categories $\cuLA_\emptyset$ and $\CDGCog$.
\end{thm}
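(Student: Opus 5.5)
The plan is to use the relative-category model of \cite{BarKan}: a relative functor that is a homotopy equivalence of relative categories induces an equivalence of the associated $\infty$-categories. So it suffices to show that $\Harr:\CDGCog\to\cuLA_\emptyset$ and $\check{\CE}:\cuLA_\emptyset\to\CDGCog$ are relative functors, and that the unit $\eta:\Id_{\CDGCog}\Rightarrow\check{\CE}\Harr$ and counit $\varepsilon:\Harr\check{\CE}\Rightarrow\Id_{\cuLA_\emptyset}$ are homotopies, i.e.\ that all their components are weak equivalences.

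First I would extend the adjunction of Proposition \ref{representable2} to $\cuLA_\emptyset$. Set $\Harr(0)=\emptyset$ for the empty coalgebra and $\check{\CE}(\emptyset)=0$. Since $\emptyset$ is initial and is mapped to only by itself (apart from the identity, nothing maps into it), the hom-set bijections extend trivially. Both functors are relative by Proposition \ref{preserve-we}, and the extra object causes no difficulty: the only weak equivalence involving $\emptyset$ is its identity, and likewise for the empty coalgebra.

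Next I check the components. For a coalgebra $C$, Proposition \ref{Quillen-Equ2} gives $C\simeq\check{\CE}\Harr(C)$. I need the actual unit map $\eta_C$ to be a weak equivalence, not just some zig-zag. I would trace the construction: on group-like elements, $\eta_C$ is exactly the bijection $\alpha$ built there from adjunction transposes. On the conilpotent component at $x$, $\eta_C$ restricts to the composite $C_x\to\CE(\Harr C_x)\to\CE\bigl((\coprod_y\Harr C_y)^{0_x}\bigr)$. The first map is Hinich's unit, a filtered quasi-isomorphism. The second is $\CE$ applied to a quasi-isomorphism of dg Lie algebras, by Corollary \ref{quasi-iso} together with Lemma \ref{H-U-com}, so it is a filtered quasi-isomorphism by \cite[Proposition 4.4.3]{Hinich}. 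Dually, for a curved Lie algebra $\g$, Proposition \ref{Quillen-Equ1} identifies the MC sets. I would verify that the counit $\varepsilon_\g$ realizes this bijection $0_{x_i}\mapsto x_i$; this follows from naturality of $\MC\cong\Hom_{\cuLA}(0,-)$ and the representability from Proposition \ref{representable2}. Then on each twisted piece $\varepsilon_\g$ restricts to Hinich's counit $\Harr\CE(\g^{x_i})\to\g^{x_i}$ precomposed with the inclusion of the summand, and both are quasi-isomorphisms.

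With both natural transformations shown to be componentwise weak equivalences, $\check{\CE}\Harr\simeq\Id$ and $\Harr\check{\CE}\simeq\Id$ as relative functors. Hence $\Harr$ is a homotopy equivalence of relative categories, and by \cite{BarKan} it induces an equivalence of $\infty$-categories. Passing to homotopy categories, $\mathrm{Ho}(\CDGCog)\simeq\mathrm{Ho}(\cuLA_\emptyset)$.

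I expect the main obstacle to be the middle step: showing that the specific unit and counit maps, rather than abstract zig-zags, are the weak equivalences. This means checking that the maps on MC elements and group-likes induced by $\eta$ and $\varepsilon$ coincide with the bijections from Propositions \ref{Quillen-Equ1} and \ref{Quillen-Equ2}. It also means controlling the summands $\CE(\g^x)$ for $x\notin\MC(\g)$, which contribute no group-like elements and should be empty or invisible.
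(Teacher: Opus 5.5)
Your proposal is correct relative to the paper's earlier results and follows the same route as the paper's proof. Both reduce the statement, via Barwick--Kan relative categories, to two facts: that $\Harr$ and $\check{\CE}$ are relative functors (Proposition~\ref{preserve-we}), and that the components of the unit and counit are weak equivalences (Propositions~\ref{Quillen-Equ1} and~\ref{Quillen-Equ2}). Your extra steps make explicit what the paper's one-paragraph proof takes for granted: you extend the adjunction to $\cuLA_\emptyset$ by setting $\Harr(0)=\emptyset$ and $\check{\CE}(\emptyset)=0$, and you check that the actual maps $\eta_C$ and $\varepsilon_{\g}$ (not merely some abstract weak equivalence) realize the bijections on group-likes and MC elements and the quasi-isomorphisms on components.
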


\begin{proof}
By Propositions \ref{Quillen-Equ1} and \ref{Quillen-Equ2}, we have that the components of the unit and counit in the adjunction $\Harr\dashv\check{\CE}$ are weak equivalences in their respective categories. Thus, they define homotopies between relative functors $\eta:\Id_{\CDGCog} \Rightarrow \check{\CE}\circ\Harr$ and $\varepsilon: \Harr \circ \check{\CE} \Rightarrow \Id_{\cuLA_\emptyset}$. This is exactly what is required for $\check{\CE}$ and $\Harr$ to define a homotopy equivalence of relative categories, and hence an equivalence of $\infty$-categories.

\end{proof}


\begin{thebibliography}{a}

\bibitem{Barwick}
C. Barwick, On left and right model categories and left and right Bousfield localizations. \emph{Homology Homotopy Appl.} 12 (2010), no.2, 245-320.

\bibitem{BarKan}
C. Barwick and D.M. Kan, Relative categories: another model for the homotopy theory of homotopy theories. \emph{Indagationes Mathematicae} 23 (2012) 42-68.

\bibitem{Beke}
T. Beke, Sheafifiable homotopy model categories. \emph{Math. Proc. Camb. Phil. Soc.} 129 (2000), 447-475.

\bibitem{BooLaz}
M. Booth and A. Lazarev, Global Koszul Duality. \emph{arXiv preprint arXiv:2304.08409} (2024).

\bibitem{CLM}
J. Chuang, A. Lazarev and W.H. Mannan, Cocommutative coalgebras: homotopy theory and Koszul duality. \emph{Homology Homotopy Appl.} 18 (2016), no.6, 303-336.

\bibitem{DS}
W.G. Dwyer and J. Spali${\rm \acute{n}}$ski, Homotopy theories and model categories. Handbook of algebraic topology. North-Holland, Amsterdam, 1995, 73-126.

\bibitem{Guan}
A. Guan, Gauge equivalence for complete $L_{\infty}$-algebras. \emph{Homology Homotopy Appl.} 23 (2021), no.2, 283-297.

\bibitem{GLST}
A. Guan, A. Lazarev, Y. Sheng and R. Tang, Review of deformation theory II: a homotopical approach. \emph{Adv. Math. (China)} 49 (2020), no.3, 278-298.

\bibitem{Harr}
D.K. Harrison, Commutative algebras and cohomology. \emph{Trans. Amer. Math. Soc.} 104 (1962), 191-204.

\bibitem{Hinich}
V. Hinich, DG coalgebras as formal stacks. \emph{J. Pure Appl. Algebra} 162 (2001), 209-250.

\bibitem{Hovey}
M. Hovey, Model categories, volume 63 of Mathematical Surveys and Monographs. \emph{American Mathematical Society}, Providence, RI, (1999).

\bibitem{Lurie}
J. Lurie, Higher Topos Theory. \emph{Annals of Mathematics Studies}. 170 (2011).

\bibitem{Maunder}
J. Maunder, Koszul duality and homotopy theory of curved Lie algebras. \emph{Homology Homotopy Appl.} 19 (2017), no. 1, 319-340.

\bibitem{Pos1}
L. Positselski, Differential graded Koszul duality: an introductory survey. \emph{Bull. Lond. Math. Soc.} 55 (2023), no.4, 1551-1640.

\bibitem{Pos2}
L. Positselski, Two kinds of derived categories, Koszul duality, and comodule-contramodule correspondence. \emph{Mem. Amer. Math. Soc.} 212 (2011) No. 996.

\bibitem{Quillen}
D. Quillen, Rational homotopy theory. \emph{Ann. of Math.} 90 (1969), 205-295.

\bibitem{MakPar}
M. Makkai and R. Paré, Accessible Categories: The Foundations of Categorical Model Theory. \emph{American Mathematical Society}, Providence, RI, (1989). 

\end{thebibliography}
\end{document}